\documentclass[11pt]{amsart}
\usepackage{amsmath,amsthm,epsfig,amssymb,hyperref}
\usepackage{amsthm,amsfonts,amssymb,amscd}

\usepackage[all]{xy}
\newtheorem{thm}[subsection]{Theorem}
\newtheorem{prop}[subsection]{Proposition}

\newtheorem{lem}[subsection]{Lemma}
\newtheorem{corol}[subsection]{Corollary}
\newtheorem{rem}[subsection]{Remark}
\theoremstyle{definition}
\newtheorem{Def}[subsection]{Definition}

\newtheorem{exam}[subsection]{Example}
\newtheorem{proposition-definition}[subsection]{Proposition-Definition}

\newcommand{\OOO}{{\mathcal O}}

\renewcommand\square{\frame{\phantom{{\large x}}}}

\usepackage[backend=biber, style=numeric, sorting=nyt]{biblatex}
\title{Generalized determinantal representation of hypersurfaces}
\author{A. El Mazouni}
\address{Laboratoire de Math\'ematiques de Lens EA 2462
Facult\'e des Sciences Jean Perrin
Rue Jean Souvraz, SP18
F-62307 LENS  Cedex France}
\email{abdelghani.elmazouni@univ-artois.fr}

\author{D. S. Nagaraj}
\address{Indain Institute of Science Education and Research, 
Srinivasapuram, Yerpedu Mandal
Tirupati Dist, Andhra Pradesh, India – 517619.}
\email{dsn@labs.iisertirupati.ac.in}

\author{Supravat Sarkar}
\address{Fine Hall, Princeton University, Princeton, NJ 700108, USA.}
\email{ss6663@princeton.edu}
\begin{document}

\begin{abstract}  

In this article we extend the notion of determinantal representation of hypersurfaces 
to the determinantal representation of sections of the determinant line
bundle of a vector bundle. We give several examples, and prove some necessary 
conditions for existence of determinantal representation. As an application, 
we show that for any integer $d \geq 1,$ there is an 
indecomposable vector bundle $E_d$ of rank $2$ on $\mathbb{P}^2$ such that 
almost all curves of degree $d$ of $\mathbb{P}^2$ arise as the degeneracy
loci of a pair of holomorphic sections of  $E_d$, upto an automorphism of 
$\mathbb{P}^2$. We use this result to obtain a
linear algebraic application.
\end{abstract}

\maketitle
{\bf Keywords:} Determinantal representation; twisted vector fields; degeneracy loci.

\section{Introduction} \setcounter{page}{1} 
Throughout we work over the field $\mathbb{C}$ of complex numbers. 
For a vector space $W$ of dimension $n$  over $\mathbb{C}$ 
and an integer $0< r< n$ we denote by $Gr(r, W)$ the Grassmannian  
variety of $r$-dimensional subspaces of $W.$ 

Determinantal representation of homogeneous polynomials have been
studied for quite some time in literature, see for example 
\cite{beauville2000determinantal},\cite{branden2011obstructions},\cite{LuJu}, 
\cite{chiantini2014determinantal}, \cite{netzer2012polynomials}, 
\cite{quarez2012symmetric} and \cite{vinnikov1989complete}.
We want to extend the notion of determinantal representation of homogeneous 
polynomials to the determinantal representation of sections of the determinant line
bundle of a vector bundle. For a smooth projective variety $X$ with an ample line 
bundle $H$ and a vector bundle $E$ on $X$ of rank $d \geq 2$, call $E$ to be $H$-
abundant if for all $m>>0$, and $D$ a general member of the complete linear system
given by the line bundle $\textrm{det } E(mH)$, there is an automorphism $\phi$ of 
$X$ such that 
$\phi^* D$ is the degeneracy loci of $d$ sections of $E(mH)$. Call $E$ to be 
(semi-)abundant, if $E$ is $H$-abundant for (some) all ample $H$. Of course, if $E$ 
and $H$ are homogeneous vector bundles, the automorphism $\phi$ is not needed.

The main result of \cite{LuJu} shows that split vector bundles on $\mathbb{P}^1$ and 
$\mathbb{P}^2$ are abundant. One of the main goals of this paper is to give several 
examples of indecomposable abundant vector bundles on $\mathbb{P}^2$.
\begin{thm}\label{thm1}
Let $N$ be the rank two vector bundle on $\mathbb{P}^2$ obtained by taking the 
quotient of ${\mathcal O}_{\mathbb{P}^2}^2 \oplus {\mathcal O}_{\mathbb{P}^2}(1) $
 by the subbundle $(x,y,z^2){\mathcal O}_{\mathbb{P}^2}(-1).$ Then $N$ is abundant.
\end{thm}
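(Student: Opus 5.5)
The plan is to reduce abundance of $N$ to a dominance statement about the wedge map on sections, and to prove dominance by exhibiting one point where the differential is surjective.

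\textbf{Setup.} The defining sequence is
\[
0\to \OOO_{\mathbb{P}^2}(-1)\xrightarrow{(x,y,z^2)} \OOO_{\mathbb{P}^2}^2\oplus\OOO_{\mathbb{P}^2}(1)\to N\to 0 .
\]
From it, $\det N\cong \OOO_{\mathbb{P}^2}(2)$, and $N$ is globally generated, being a quotient of a globally generated bundle. Every ample $H$ on $\mathbb{P}^2$ is $\OOO(k)$ with $k\geq 1$, so $E(mH)=N(n)$ with $n=mk$, and $n\to\infty$ as $m\to\infty$. It therefore suffices to show the following for all $n\gg0$: the morphism
\[
\Phi: H^0(N(n))\times H^0(N(n))\to H^0(\OOO(2n+2)),\qquad (s,t)\mapsto s\wedge t,
\]
is dominant. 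Then a general $D\in|\OOO(2n+2)|$ is the degeneracy locus of two sections, and we may take $\phi=\mathrm{id}$.

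Concretely, since $H^1(\OOO(n-1))=0$, a section of $N(n)$ is a triple $(a,b,c)$ with $\deg a=\deg b=n$ and $\deg c=n+1$, taken modulo $(x,y,z^2)h$. The wedge $s\wedge t$ is the $3\times 3$ determinant with first row $(x,y,z^2)$ and the two triples as the other rows. This description is only for orientation; the argument below is cohomological.

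\textbf{Reduction to the differential.} We work over $\mathbb{C}$, so by generic smoothness it is enough to find one point $(s,t)$ where the differential
\[
d\Phi_{(s,t)}(u,v)=u\wedge t+s\wedge v
\]
is surjective onto $H^0(\OOO(2n+2))$.

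\textbf{The term $s\wedge v$.} Choose $s$ a general section of $N(n)$. Since $N(n)$ is globally generated of rank $2$ on a surface, $s$ vanishes on a reduced finite set $Z$ of $c_2(N(n))$ points. This gives the Koszul sequence
\[
0\to\OOO\xrightarrow{s} N(n)\xrightarrow{s\wedge\cdot}I_Z(2n+2)\to0 .
\]
Because $H^1(\OOO)=0$, the map $v\mapsto s\wedge v$ surjects onto $H^0(I_Z(2n+2))$.

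\textbf{The term $u\wedge t$.} It remains to hit the quotient $H^0(\OOO(2n+2))/H^0(I_Z(2n+2))$. This quotient injects into $\bigoplus_{p\in Z}\OOO(2n+2)|_p$, so it suffices that the evaluation map
\[
u\mapsto \bigl((u\wedge t)(p)\bigr)_{p\in Z}
\]
is surjective onto $\bigoplus_{p\in Z}\OOO(2n+2)|_p$. I would choose $t$ general, so that $t(p)\neq0$ for all $p\in Z$. Then $u\mapsto (u\wedge t)(p)$ is a nonzero linear functional on the fibre $N(n)|_p$. The required surjectivity then follows once $H^0(N(n))\to\bigoplus_{p\in Z}N(n)|_p$ is surjective, i.e. once $H^1(N(n)\otimes I_Z)=0$.

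\textbf{Main obstacle.} The hard step is proving $H^1(N(n)\otimes I_Z)=0$, because $Z$ depends on $n$ and its length $c_2(N(n))$ grows quadratically in $n$. I would handle it by twisting the Koszul sequence of $s$ by $I_Z$, or more simply by writing $N(n)\otimes I_Z$ as a quotient via the defining sequence. Either way the question reduces to vanishing of $H^1(I_Z(j))$ and $H^2(I_Z(j-1))$ for $j\in\{n,n+1\}$.
\begin{itemize}
\item $H^2(I_Z(j-1))\cong H^2(\OOO(j-1))=0$ for $n\geq 1$, since $Z$ is zero-dimensional.
\item $H^1(I_Z(j))$ is the failure of $Z$ to impose independent conditions on forms of degree $j$.
\end{itemize}
For the second point I would use the Koszul resolution of $I_Z$ given by $s$, namely $0\to\OOO(-2n-2)\to N(n)^\vee\to I_Z\to0$. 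Twisting by $j$ and using $H^2(\OOO(j-2n-2))\cong H^0(\OOO(2n-1-j))^\vee$, the vanishing reduces to a cohomology computation for twists of $N^\vee$ via the defining sequence. This is the bookkeeping I expect to be delicate. If the direct vanishing fails at the relevant degrees, I would fall back on choosing $t$ so that the functionals $(u\wedge t)(p)$ only need to be checked on a subspace that is visibly surjective.
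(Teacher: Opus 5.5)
\textbf{There is a genuine gap.} The step you flag as the main obstacle is false in the range abundance needs, not merely delicate. From the defining sequence, $c(N)=(1+h)/(1-h)$, so $c_1(N)=c_2(N)=2$ and $|Z|=c_2(N(n))=n^2+2n+2$. Since $H^1(\mathcal{O}(n-1))=0$, you get $h^0(N(n))=2\binom{n+2}{2}+\binom{n+3}{2}-\binom{n+1}{2}=n^2+5n+5$. Hence
\[ \chi(N(n)\otimes I_Z)=h^0(N(n))-2|Z|=-n^2+n+1<0 \quad\text{for } n\geq 2. \]
So the evaluation map $H^0(N(n))\to\bigoplus_{p\in Z}N(n)|_p$ can never be surjective for $n\ge 2$: its source has dimension $n^2+5n+5$ and its target has dimension $2n^2+4n+4$. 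Your planned route via $H^1(I_Z(j))$ breaks even earlier, since $|Z|>h^0(\mathcal{O}(n))$ forces $H^1(I_Z(n))\neq 0$ for every $n$. The fallback sentence about choosing $t$ is not an argument.

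Your reduction has also not gained anything. Because $H^1(N(n))=0$, the Koszul sequence gives $H^1(I_Z(2n+2))=0$. Therefore surjectivity of $u\mapsto((u\wedge t)(p))_{p\in Z}$ onto $\mathbb{C}^{Z}$ is \emph{equivalent} to surjectivity of $d\Phi_{(s,t)}$. Stated symmetrically, using the Koszul sequences of both $s$ and $t$:
\[ \operatorname{im} d\Phi_{(s,t)}=H^0(I_{Z(s)}(2n+2))+H^0(I_{Z(t)}(2n+2)). \]
So surjectivity amounts to the $2n^2+4n+4$ points of $Z(s)\cup Z(t)$ imposing independent conditions on forms of degree $2n+2$. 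This is a nontrivial statement about a very special configuration: $Z(s)$ lies on the curve $xb-ya=0$ of degree $n+1$ through $[0:0:1]$, and similarly for $Z(t)$. On your route it is the entire content of the theorem, and it remains unproved. The paper carries out such differential computations only for $n=0$ (Examples 1 and 2), and remarks that explicit points with surjective differential are hard to produce for large $n$.

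The paper sidesteps all of this. It quotes \cite[Main Theorem]{LuJu}: a general curve of degree $2n+2$ is the determinant of a $3\times 3$ matrix whose rows have degrees $(n,n,n+1)$, $(n,n,n+1)$ and $(1,1,2)$. It then uses an automorphism of $\mathbb{P}^2$ to move the linear entries $l,m$ of the last row to $x,y$. Finally, column operations turn $Q$ into a nonzero multiple of $z^2$.

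This is also why your claim that one may take $\phi=\mathrm{id}$ is more than the paper proves. The definition of abundance allows $\phi$, so you do not need the stronger statement. If you want to keep the differential approach, you must actually prove the independence statement above for some pair $(s,t)$ and all large $n$.
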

\begin{thm}\label{thm2}
For $k\geq 1$, let $M_k$ be the syzygy bundle of $\mathcal{O}_{\mathbb{P}^2}(k)$,
that is, the dual $M_k^*$ is the kernel of the evaluation map 
$H^0(\mathbb{P}^2,\mathcal{O}_{\mathbb{P}^2}(k))\otimes_k \mathcal{O}_{\mathbb{P}^2}
\to \mathcal{O}_{\mathbb{P}^2}(k)$. Then $M_k$ is abundant for all $k.$
\end{thm}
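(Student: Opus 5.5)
The plan is to identify the degeneracy loci of sections of $M_k(m)$ with determinants of $n\times n$ matrices of a fixed degree pattern. Then I reduce to the split case handled in \cite{LuJu}, using only constant row operations. Throughout, put $V=H^0(\mathbb{P}^2,\mathcal{O}_{\mathbb{P}^2}(k))$ and $n=\dim V=\binom{k+2}{2}$.

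\emph{Step 1: a presentation of $M_k$.} Dualizing the defining sequence of $M_k^*$ gives
\[
0\to \mathcal{O}_{\mathbb{P}^2}(-k)\xrightarrow{\ \mu\ } V^*\otimes\mathcal{O}_{\mathbb{P}^2}\to M_k\to 0 .
\]
Here $\mu$ is the column vector $(x^\alpha)_{|\alpha|=k}$ of all degree $k$ monomials, written in the dual monomial basis. Hence $M_k$ has rank $n-1$ and $\det M_k=\mathcal{O}_{\mathbb{P}^2}(k)$. After twisting by $\mathcal{O}(m)$ with $m\ge 1$, we have $H^1(\mathcal{O}(m-k))=0$ on $\mathbb{P}^2$. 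So every section of $M_k(m)$ lifts to a section of $V^*\otimes\mathcal{O}(m)$, that is, to a column of $n$ forms of degree $m$. Given $n-1$ sections $s_1,\dots,s_{n-1}$ with lifts $\tilde s_i$, they are dependent at a point $p$ exactly when $\mu(p),\tilde s_1(p),\dots,\tilde s_{n-1}(p)$ are dependent. The degeneracy locus is therefore the zero set of
\[
\det\bigl[\,\mu \mid \tilde s_1\mid\cdots\mid \tilde s_{n-1}\,\bigr],
\]
which has degree $k+(n-1)m=\deg\det M_k(m)$. This holds for any ample $H=\mathcal{O}(h)$, since we only use twists $\mathcal{O}(mh)$. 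Because $M_k$ and $H$ are homogeneous, no automorphism $\phi$ is needed. It therefore suffices to show the following: a general form $F$ of degree $k+(n-1)m$ is, up to a nonzero scalar, the determinant of an $n\times n$ matrix whose first column is $\mu$ and whose other entries are arbitrary forms of degree $m$.

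\emph{Step 2: reduction to the split case.} The split bundle $\mathcal{O}(k)\oplus\mathcal{O}(m)^{n-1}$ is abundant by the main result of \cite{LuJu}. I take this as giving the following statement for $m\gg0$. Let $\mathcal{A}$ be the affine space of $n\times n$ matrices whose first column consists of degree $k$ forms $g_1,\dots,g_n$ and whose remaining entries are degree $m$ forms. Then the morphism $\det:\mathcal{A}\to H^0(\mathcal{O}(k+(n-1)m))$ is dominant. If the cited statement is phrased with some other normalization of the degree pattern, I would first transpose or twist to bring it to this form.

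Let $\mathcal{U}\subset\mathcal{A}$ be the locus where $g_1,\dots,g_n$ are linearly independent. This is a nonempty Zariski open subset, since the $g_i$ range over all of $V$ and $n=\dim V$. As $\mathcal{A}$ is irreducible, $\det|_{\mathcal{U}}$ is still dominant, so its image contains a dense open set of forms $F$.

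\emph{Step 3: normalizing the first column.} Take $A\in\mathcal{U}$ with $\det A=F$. Since the $g_i$ form a basis of $V$, there is a constant matrix $P\in GL_n(\mathbb{C})$ with $P\,(g_1,\dots,g_n)^{t}=\mu$. Then $PA$ has first column $\mu$ and degree $m$ entries elsewhere, and $\det(PA)=\det(P)\,F$. By Step 1, the columns $2,\dots,n$ of $PA$ define $n-1$ sections of $M_k(m)$ whose degeneracy locus is $\{F=0\}$. This proves that $M_k$ is abundant.

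The main obstacle is Step 2, namely extracting from \cite{LuJu} exactly the dominance statement for this degree pattern, with one column of degree $k$ and $n-1$ columns of degree $m$, for all $m\gg0$. If \cite{LuJu} only gives it for certain twists, I would instead prove the dominance directly. The plan there is to compute the differential of $\det$ at a well-chosen point, for example a nearly diagonal matrix built from products of linear forms. I would then check that the cofactor expansion $\sum_i C_{ij}\,\delta a_{ij}$ spans all forms of degree $k+(n-1)m$, an ideal-membership statement that holds for $m$ large.
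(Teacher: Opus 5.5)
Your proposal is correct and follows essentially the same route as the paper. You lift sections of $M_k(m)$ through $0\to\mathcal{O}(m-k)\to\mathcal{O}(m)^{n}\to M_k(m)\to 0$, invoke \cite{LuJu} for determinantal representations with one line of degree-$k$ forms and all other entries of degree $m$, and then use a constant element of $GL_n(\mathbb{C})$ to turn that line into a fixed basis of $H^0(\mathcal{O}_{\mathbb{P}^2}(k))$; you work with columns and row operations where the paper works with rows and column operations.

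Restricting $\det$ to the dense open set $\mathcal{U}$ is a cleaner justification of the paper's ``since $C$ is general'' step. One correction: what you need from \cite{LuJu} is the statement for this fixed degree pattern, exactly as the paper uses it. Abundance of a fixed split bundle is not enough, since it only produces degree patterns shifted by a further large twist.
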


Note that $M_k$ is indecomposable, it is in fact a stable bundle.
\begin{corol}\label{tgt}
    The tangent and cotangent bundle of $\mathbb{P}^2$ are abundant.
\end{corol}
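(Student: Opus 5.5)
The plan is to deduce Corollary \ref{tgt} from Theorem \ref{thm2} with $k=1$, together with the fact that abundance is unchanged by twisting with a line bundle.

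First I identify $M_1$. For $k=1$ the evaluation map $H^0(\mathbb{P}^2,\mathcal{O}_{\mathbb{P}^2}(1))\otimes\mathcal{O}_{\mathbb{P}^2}\to\mathcal{O}_{\mathbb{P}^2}(1)$ is the map $\mathcal{O}^3\to\mathcal{O}(1)$ of the Euler sequence. Its kernel is $\Omega_{\mathbb{P}^2}(1)$, so
\[
M_1^*\cong\Omega_{\mathbb{P}^2}(1), \qquad M_1\cong T_{\mathbb{P}^2}(-1).
\]
Theorem \ref{thm2} therefore gives that $T_{\mathbb{P}^2}(-1)$ is abundant.

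Next I check that abundance is invariant under twisting by a line bundle $\mathcal{O}(a)$; on $\mathbb{P}^2$ every ample $H$ is $\mathcal{O}(h)$ with $h\ge1$. Let $E$ have rank $2$ and set $E'=E(a)$. Then
\[
E'(mH)=E(a+mh), \qquad \det E'(mH)=\det E(a+mh)\otimes\mathcal{O}(a\cdot 0)=\det E\otimes\mathcal{O}(2a+2mh).
\]
If $h\mid a$, write $a=jh$. Then $E'(mH)=E((m+j)H)$, and the condition ``for all $m\gg0$'' for $E'$ is the same as that for $E$. So $E$ is $H$-abundant if and only if $E'$ is.

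The case $h\nmid a$ is the main obstacle. Here the twists $a+mh$ run only over one residue class modulo $h$, so the statement for $E'$ is not formally the same as the statement for $E$. There are two ways to handle it:
\begin{itemize}
\item Check from the definition that $H$-abundance only depends on $E(n)$ being representable for all $n\gg0$. One would expect the proof of Theorem \ref{thm2} to show exactly this.
\item Use the observation that on $\mathbb{P}^2$ the bundles $E$ and $H$ are homogeneous, so the automorphism $\phi$ plays no role. Then $H$-abundance for every $H$ should reduce to abundance for $H=\mathcal{O}(1)$, where every $a$ is divisible by $h=1$.
\end{itemize}
I will state the reduction as: $E$ is abundant if and only if, for all $n\gg0$, a general section of $\det E(n)$ is (up to automorphism) the degeneracy locus of two sections of $E(n)$. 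Under this formulation, any condition of the form ``$m\gg0$'' is visibly invariant under $E\mapsto E(a)$.

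Granting this invariance, $T_{\mathbb{P}^2}=M_1(1)$ is abundant.

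For the cotangent bundle, use that $E$ has rank $2$, so $E^*\cong E\otimes(\det E)^{-1}$. Since $\det M_1=\mathcal{O}(1)$, this gives $M_1^*\cong M_1(-1)$. Hence $\Omega_{\mathbb{P}^2}=M_1^*(-1)=M_1(-2)$, which is a twist of $M_1$, so $\Omega_{\mathbb{P}^2}$ is abundant as well.
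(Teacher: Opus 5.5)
Your proposal is correct and follows the paper's proof, which consists only of the remark that $T_{\mathbb{P}^2}$ and $\Omega_{\mathbb{P}^2}$ are line bundle twists of $M_1$, combined with Theorem \ref{thm2}. Your identifications $M_1\cong T_{\mathbb{P}^2}(-1)$ and $\Omega_{\mathbb{P}^2}\cong M_1(-2)$ are right, and the twist-invariance you check (which the paper leaves implicit) holds. The case $h\nmid a$ is not actually an obstacle: for $H=\mathcal{O}(h)$ one has $E(mH)=E(mh)$ with $mh\gg0$, so on $\mathbb{P}^2$ abundance is the same as $\mathcal{O}(1)$-abundance, and $\mathcal{O}(1)$-abundance is visibly unchanged by any twist.
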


\begin{corol}\label{cor1}
For any $d\geq 1$ there is an indecomposable vector bundle $E_d$ of rank
$2$ on $\mathbb{P}^2$ such that  almost all curves of degree $d$ in 
$\mathbb{P}^2$ arise as the degeneracy loci of a pair of holomorphic 
sections of  $E_d,$ up to an automorphism of $\mathbb{P}^2$.
\end{corol}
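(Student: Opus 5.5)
Corollary \ref{cor1} follows from Theorems \ref{thm1} and \ref{thm2} by choosing a suitable twist of an indecomposable abundant rank two bundle. Write $\mathcal{O}=\mathcal{O}_{\mathbb{P}^2}$. The degeneracy locus of two sections of a rank two bundle $E$ is a member of $|\det E|$. We therefore want, for each $d\geq 1$, an indecomposable rank two bundle $E_d$ with $c_1(E_d)=d$, and we want a general member of $|\mathcal{O}(d)|$ to be, up to an automorphism, such a degeneracy locus.

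The plan is to split according to the parity of $d$ and use two families of bundles.
\begin{itemize}
\item $c_1(M_1)=1$. Since $M_1^*$ is the kernel of $H^0(\mathcal{O}(1))\otimes\mathcal{O}\to\mathcal{O}(1)$, the Euler sequence identifies $M_1^*\cong\Omega(1)$, so $M_1\cong T_{\mathbb{P}^2}(-1)$ and $c_1(M_1)=1$. Hence $M_1(m)$ has rank $2$ and $c_1=1+2m$, which covers odd $d$.
\item $c_1(N)=2$. Computing $c_1(N)$ from its defining sequence gives $c_1(N)=c_1(\mathcal{O}^2\oplus\mathcal{O}(1))-c_1(\mathcal{O}(-1))=1+1=2$. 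Hence $N(m)$ has rank $2$ and $c_1=2+2m$, which covers even $d$.
\end{itemize}
Alternatively one can use $M_k$ with suitable $k$. However, $M_k$ has rank $\binom{k+2}{2}-1$, which is $2$ only for $k=1$, so the rank two requirement forces us to $M_1$ and $N$.

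Indecomposability is preserved under twisting, so it suffices to check it for $M_1$ and $N$. For $M_1$ it is the stability of the tangent bundle noted after Theorem \ref{thm2}. For $N$, suppose $N\cong \mathcal{O}(a)\oplus\mathcal{O}(b)$. The defining sequence gives $c_2(N)=c_2(\mathcal{O}^2\oplus\mathcal{O}(1))-c_1(\mathcal{O}(-1))\cdot c_1(N)$. Using the multiplicativity of total Chern classes, $c(N)=1/(1-h)$ multiplied out suitably, which yields $c_2(N)=h^2+\dots$. I will compute it carefully and compare with $c_2=ab$, $a+b=2$. Alternatively, $H^0(N(-1))$ and $H^0(N(-2))$ computed from the sequence should rule out $\mathcal{O}(1)^2$, $\mathcal{O}\oplus\mathcal{O}(2)$, and so on. I expect this to be the only real check.

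Abundance means the property holds for $E(mH)$ with all $m\gg 0$. Here $H=\mathcal{O}(1)$ and $\mathrm{Pic}$ is $\mathbb{Z}$, so $H$-abundance covers all ample $H$. Thus abundance gives the statement only for large $d$ in each parity class, say $d\geq d_0$. For the remaining finitely many small $d$ we need separate choices of $E_d$.
\begin{itemize}
\item For $d=1$, take $T(-1)$ itself. Its sections correspond to points of $\mathbb{C}^3$, and two general sections degenerate exactly along the line through the two corresponding points, so every line arises.
\item For the other small $d$, I would either show that the proof of Theorem \ref{thm1}/\ref{thm2} gives the result for all twists $m\geq 0$ rather than just $m\gg0$, or take $E_d=M_1(m)$ or $N(m)$ for the explicit small $m$. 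In the latter case I would verify by a dimension count that the degeneracy map from pairs of sections to $|\mathcal{O}(d)|$ is dominant modulo $PGL_3$.
\end{itemize}

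The main obstacle is this low-degree case: "$m\gg0$" in the definition must be made effective, down to $m=0$ or $m=-1$ as needed. My first attempt is to inspect the proofs of Theorems \ref{thm1} and \ref{thm2}, which I expect to be effective.
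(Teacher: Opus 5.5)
This is essentially the paper's argument: twists of $T_{\mathbb{P}^2}$ (i.e.\ $M_1(m)$) for odd $d$ and twists of $N$ for even $d$, with the small-degree issue settled exactly as you anticipate, since the proofs of Theorems~\ref{thm1} and~\ref{thm2} (via \cite{LuJu}) give dominance for every twist $n\geq 0$, not only $n\gg 0$ (your indexing $c_1(N(m))=2m+2$ is the right one; the paper's $E_{2k}=N(k)$ should read $N(k-1)$). One correction on the indecomposability of $N$: $c(N)=(1+h)/(1-h)=1+2h+2h^2$, so $c_2(N)=2$, which rules out every $\mathcal{O}(a)\oplus\mathcal{O}(2-a)$ because $a(2-a)\leq 1$ (with your $1/(1-h)$ you would get $c_2=1$ and fail to exclude $\mathcal{O}(1)^2$); your cohomological alternative also works, since $h^0(N(-1))=1$ whereas every such split bundle has $h^0\geq 2$ after twisting by $-1$.
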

Next we show the bundles considered in \cite{ray2020projective} are abundant.
\begin{thm}\label{thm3}
For integer $2\leq r\leq 4$, let $E_r$ be the dual of the kernel of the surjection \begin{equation*}
\mathcal{O}_{\mathbb{P}^2}^{r+2}\to\mathcal{O}_{\mathbb{P}^2}(1)^{2}     
\end{equation*}
given by 
\begin{equation*}
 \underline{\alpha}\longmapsto (x_{0}\alpha_{0}+x_{1}\alpha_{1}+x_{2}\alpha_{2}, x_{0}\alpha_{r-1}+x_{1}\alpha_{r}+x_{2}\alpha_{r+1}),   
\end{equation*} where $x_0,x_1,x_2$ is the standard basis of 
$H^0(\mathcal{O}_{\mathbb{P}^2}(1)).$ Then $E_r$ is abundant.
\end{thm}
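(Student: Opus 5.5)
The plan is to turn the statement into a determinantal problem for a matrix with some columns fixed, and then prove dominance by a tangent-space computation. Dualizing the defining sequence gives
\begin{equation*}
0\to \mathcal{O}_{\mathbb{P}^2}(-1)^2 \xrightarrow{B} \mathcal{O}_{\mathbb{P}^2}^{r+2}\to E_r\to 0 .
\end{equation*}
Here $B$ is the fixed $(r+2)\times 2$ matrix of linear forms read off from the two rows $(x_0,x_1,x_2,0,\dots)$ and $(\dots,0,x_0,x_1,x_2)$. Thus $E_r$ has rank $r$, $\det E_r=\mathcal{O}(2)$, and $\det E_r(m)=\mathcal{O}(rm+2)$.

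Since $H^1(\mathcal{O}(m-1))=0$ for $m\ge 0$, every map $\mathcal{O}^r\to E_r(m)$, i.e.\ every $r$-tuple of sections, lifts to an $(r+2)\times r$ matrix $A$ of forms of degree $m$. The degeneracy locus of the sections is then the curve $\det[A\,|\,B]=0$. So I must show that the morphism
\begin{equation*}
\Phi:\ PGL_3\times H^0(\mathcal{O}(m))^{r(r+2)}\to H^0(\mathcal{O}(rm+2)),\qquad (g,A)\mapsto g^*\det[A\,|\,B],
\end{equation*}
is dominant (after projectivizing) for every $m\gg 0$ and for every ample $H$. On $\mathbb{P}^2$ every ample $H$ is $\mathcal{O}(h)$, so it suffices to treat $m$ large; the $H$-case only restricts $m$ to multiples of $h$. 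This is the analogue of the result of \cite{LuJu} for split bundles, except that two columns are frozen equal to $B$.

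Dominance will follow from surjectivity of $d\Phi$ at one well-chosen point $(\mathrm{id},A_0)$, by generic smoothness in characteristic $0$. Put $F_0=\det[A_0|B]$. The image of $d\Phi$ is the sum of two spaces.
\begin{itemize}
\item The $A$-directions give $\sum_{i,j}\Delta_{ij}\,a_{ij}$, where $a_{ij}$ ranges over forms of degree $m$ and $\Delta_{ij}$ runs over the cofactors of the entries in the $A$-columns. This is $I_{rm+2}$, the degree-$(rm+2)$ part of the ideal $I$ generated by the $\Delta_{ij}$, which are forms of degree $(r-1)m+2$.
\item The $PGL_3$-directions give the span of $x_k\,\partial F_0/\partial x_l$.
\end{itemize}
So the key claim is that
\begin{equation*}
I_{rm+2}+\langle x_k\,\partial_l F_0\rangle = H^0(\mathcal{O}(rm+2)).
\end{equation*}

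To prove this I would choose $A_0$ concretely. One natural choice makes $[A_0|B]$ nearly block triangular, so that $F_0$ is a product of a determinant of linear forms coming from $B$ and a general determinant of degree-$m$ forms. Better still is a choice making $C_0=\{F_0=0\}$ smooth, obtained by a Bertini-type argument: the degeneracy loci are moving, since $E_r(m)$ is globally generated. I then describe the cokernel geometrically. The cokernel of $[A_0|B]$ is a sheaf $\mathcal{L}=\operatorname{coker}$ supported on $C_0$, a line bundle when $C_0$ is smooth. The cofactors in the $A$-columns are exactly the sections of $\mathcal{L}^{\vee}\otimes\omega$-type twists pairing with the $A$-part of the resolution.

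Standard deformation theory of determinantal curves (as in Beauville) identifies the cokernel of $H^0(\mathcal{O}(rm+2))/I_{rm+2}$ with a subspace of $H^1$ of a twist of $\mathcal{E}nd$ of the resolution restricted to the frozen $B$-part. Concretely it is dual to
\begin{equation*}
\operatorname{Hom}\bigl(\mathcal{O}(-1)^2,\ \mathcal{O}^{r+2}\bigr)\big/\bigl(\text{changes preserving }B\bigr),
\end{equation*}
which is finite dimensional and independent of $m$. I would compute it via the Eagon--Northcott complex: it should have dimension at most $8=\dim PGL_3$.

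The main obstacle is the last step. I must show that the finitely many missing directions, namely the deformations of the frozen block $B$ modulo the obvious $GL_{r+2}\times GL_2$ symmetries, are exactly accounted for by the Lie algebra $\mathfrak{sl}_3$ acting through $x_k\partial_l$. Equivalently, I must show that $E_r$ has no deformations beyond those induced by automorphisms of $\mathbb{P}^2$. This is plausible because $E_r$ is determined by the pencil of linear maps coming from $B$, and $PGL_3$ acts on such pencils with a dense orbit for $r\le 4$; this is where I expect the restriction $2\le r\le 4$ of \cite{ray2020projective} to enter. I would first try to verify this for $r=2,3,4$ by writing the $\mathfrak{gl}_3$-action on $B$ explicitly and checking that it surjects onto the deformation space of $B$ modulo $GL_{r+2}\times GL_2$. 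I would then check that the residual pairing with $F_0$ is nondegenerate for the chosen $A_0$, by specializing to a reducible $F_0$ where the minors can be written down explicitly. With that in hand the differential is surjective, $\Phi$ is dominant, and $E_r$ is abundant.
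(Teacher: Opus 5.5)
Your reduction is correct. Lifting sections through $0\to\mathcal{O}(m-1)^2\to\mathcal{O}(m)^{r+2}\to E_r(m)\to 0$ turns abundance into dominance of $(g,A)\mapsto g^*\det[A\,|\,B]$.

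The second half of your infinitesimal plan is also sound. Write $\Psi(A,B')=\det[A\,|\,B']$ and $D=\sum c_{kl}x_k\partial_l$. Then $d\Psi(XA,XB)=\mathrm{tr}(X)F_0$, $d\Psi(0,BY)=\mathrm{tr}(Y)F_0$ and $d\Psi(DA,DB)=DF_0$. Together with $F_0\in I_{rm+2}$ (cofactor expansion along an $A$-column), these show that every $B$-direction $d\Psi(0,XB+BY+DB)$ lies in $I_{rm+2}+\langle x_k\partial_lF_0\rangle$. So if $(X,Y,D)\mapsto XB+BY+DB$ is onto $\mathrm{Hom}(\mathcal{O}(-1)^2,\mathcal{O}^{r+2})$, all cofactor directions of the $B$-entries are covered. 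That surjectivity says the orbit of $B$ is open, which is essentially what the paper takes from \cite[Theorem 5.2]{BSV}. What you need is this surjectivity, not the bound ``dimension at most $8$''. Also, the missing space is a quotient of $\mathrm{Hom}(\mathcal{O}(-1)^2,\mathcal{O}^{r+2})$ by the tangent space to the $GL_{r+2}\times GL_2$-orbit of $B$, not something dual to it.

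The genuine gap is the first half: the claim that the cokernel of $I_{rm+2}$ in $H^0(\mathcal{O}(rm+2))$ is accounted for by deformations of the frozen block $B$ alone. That claim says exactly that $d\Psi$ is surjective at $(A_0,B)$. Equivalently, $I_{rm+2}$ plus linear forms times the cofactors of the $B$-entries must be all of $H^0(\mathcal{O}(rm+2))$. By generic smoothness and the open orbit, this is equivalent to the theorem of \cite{LuJu} for this degree pattern: a general curve of degree $rm+2$ is the determinant of a matrix with $r$ columns of degree-$m$ forms and two columns of linear forms.

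This does not follow from ``standard deformation theory''. A first-order deformation of $C_0$ does lift to one of $(C_0,\mathcal{L})$. It lifts to a deformation of the presentation matrix with the same degree pattern only if the lift can be chosen so that $\bigoplus_k H^0(\mathcal{L}_\epsilon(k))$ is flat. That requires the connecting maps $H^0(\mathcal{L}(k))\to H^1(\mathcal{L}(k))$ to vanish, which is a real condition, and it is where existence results like \cite{LuJu} do their work. Your plan defers this to an unspecified $A_0$ and an unperformed check, uniformly in $m$; that check is the whole difficulty.

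The fix is to cite \cite{LuJu}, as the paper does, but then the tangent computation is unnecessary. The paper takes a general curve $C$ of degree $rn+2$ written as such a determinant and notes that the linear block $s'$ may be assumed general. It then gets $\phi\in\mathrm{Aut}\,\mathbb{P}^2$ with $\phi^*\operatorname{coker}(s')\cong E_r$ from \cite[Theorem 5.2]{BSV}. Pulling the $r$ sections back by $\phi$ exhibits $\phi^*C$ as a degeneracy locus of sections of $E_r(n)$.
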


Now we show that the class of varieties possessing a (semi-)abundant vector bundle is very restrictive.
\begin{thm}\label{thm4}
Suppose there is an semi-abundant vector bundle on $X$ and $\textrm{dim} X=n$. Then 
$n\leq 2$. If $n= 2$, then either 
$\kappa(X)=-\infty$, or $\kappa(X)=0$ and $X$ is minimal. If
$n= 2$ and there is an abundant vector bundle on $X$, then 
$-K_X$ is rationally effective, that is, 
$H^0(X,\mathcal{O}(-mK_X))\neq 0$ for some $m>0.$
\end{thm}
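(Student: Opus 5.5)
The proof will be a dimension count combined with the structure of the automorphism group. Fix an ample $H$ for which $E$ is $H$-abundant and let $d=\textrm{rk}\,E$. Consider
\[
\Phi_m:\ \mathrm{Aut}(X)\times H^0(X,E(mH))^{\oplus d}\longrightarrow \mathbb{P}H^0(X,\det E(mH)),\qquad (\phi,s_1,\dots,s_d)\mapsto \phi_*\bigl(Z(s_1\wedge\dots\wedge s_d)\bigr).
\]
Abundance says $\Phi_m$ is dominant. The group $GL_d$ acts on the $d$-tuples of sections and does not change the degeneracy divisor. Also $\mathrm{Aut}(X)$ has finitely many components relevant here, since it has to preserve the class of $H$ up to numerical equivalence on a dense set. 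So we get
\[
h^0(\det E(mH)) -1 \le \dim \mathrm{Aut}^0(X) + d\,h^0(E(mH)) - d^2 + O(1).
\]
For $m\gg0$ both sides are Hilbert polynomials. By Riemann–Roch, $h^0(\det E(mH)) = \frac{d^n m^n H^n}{n!}+O(m^{n-1})$, while $d\,h^0(E(mH)) = \frac{d^2 m^n H^n}{n!}+O(m^{n-1})$. Comparing leading coefficients gives $d^n\le d^2$. Since $d\ge 2$, this forces $n\le 2$.

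Now take $n=2$. Here the leading terms agree, so I compare the $m^1$ coefficients and then the constant terms. Write $c_1=c_1(E)$ and $K=K_X$. By Riemann–Roch on the surface,
\[
h^0(\det E(mH))=\tfrac12(c_1+dmH)(c_1+dmH-K)+\chi(\mathcal O_X),
\]
\[
d\,h^0(E(mH))=d\Bigl(\tfrac{d}{2}m^2H^2+m H\cdot(c_1-\tfrac d2K)+\chi(E)\Bigr).
\]
The $m$-coefficients are $dH\cdot c_1-\frac d2 H\cdot K$ on the left and $dH\cdot c_1-\frac{d^2}{2}H\cdot K$ on the right. The inequality then gives
\[
-\tfrac d2 H\cdot K\le -\tfrac{d^2}{2}H\cdot K,\quad\text{i.e.}\quad (d^2-d)\,H\cdot K\le 0 .
\]
So $H\cdot K_X\le 0$ for the relevant ample $H$. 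If $H\cdot K<0$ for some ample $H$, then $K$ is not pseudo-effective and $\kappa=-\infty$. If $H\cdot K=0$, I go on to compare the constant terms, which are now linear in $\chi(E)$, $c_1^2$, and so on. I expect this to give only a weak condition, which is why the authors state a dichotomy.

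The main obstacle is deducing $\kappa(X)=0$ and minimality when $H\cdot K_X=0$ and $\kappa\ge0$. Here is the argument I would try first. When $\kappa\ge 0$, some multiple $mK$ is effective. If $mK$ is a nonzero effective divisor, then $H\cdot mK>0$ for every ample $H$, a contradiction. So $mK\sim 0$ whenever it is effective. This gives $\kappa=0$, and $K$ is torsion, hence nef. Therefore $X$ is minimal, since a $(-1)$-curve $C$ would have $K\cdot C=-1$.

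For the abundant case, the inequality $H\cdot K\le 0$ holds for \emph{all} ample $H$, so $-K$ lies in the dual of the ample cone, i.e.\ $-K$ is pseudo-effective. I then need to upgrade this to $H^0(-mK)\ne0$. In the case $\kappa=0$, $K$ is torsion, so $-mK\sim 0$ for suitable $m$. In the case $\kappa=-\infty$, $X$ is ruled over a curve $B$. Pseudo-effectivity of $-K$ for a ruled surface should force $g(B)\le1$, and I would then check anticanonical effectivity case by case: rational surfaces with $-K$ pseudo-effective, and ruled surfaces over an elliptic curve. I expect this last case analysis, possibly using the vanishing of $h^1$ terms from the constant-term comparison to exclude bad elliptic ruled surfaces, to be the delicate point.
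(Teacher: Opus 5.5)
Your handling of the first two assertions is essentially the paper's argument. You use the same dimension count $h^0(\det E(mH))-1\le d\,(h^0(E(mH))-d)+\dim \mathrm{Aut}^0(X)$, the same comparison of $m^n$-coefficients, and the same comparison of $m$-coefficients giving $H\cdot K_X\le 0$. You then make the same observation that an effective $mK_X$ must be trivial, and you even spell out minimality, which the paper leaves implicit. One small correction: the automorphisms that occur preserve the class of $\det E(mH)$, not of $H$. Since that class is ample for $m\gg 0$, they lie in a finite-type subgroup of $\mathrm{Aut}\,X$, which is what your count needs. (The paper instead uses that $\mathrm{Aut}\,X$ has countably many components.)

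The genuine gap is in the last assertion. You correctly reach that $-K_X$ is pseudo-effective, but you never pass from this to $H^0(X,\mathcal{O}(-mK_X))\neq 0$. The paper does this in one line by quoting \cite[Theorem 1.3]{LX}, and your case analysis does not replace it.

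Its first step is false: pseudo-effectivity of $-K_X$ does not force $g(B)\le 1$. On $X=\mathbb{P}_B(\mathcal{O}_B\oplus\mathcal{L})$ with $\deg\mathcal{L}=-e<0$, one has $-K_X\equiv 2C_0+(e+2-2g)f$. This is a nonnegative combination of curve classes whenever $e\ge 2g-2$, for every $g$.

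Worse, pseudo-effectivity alone cannot give linear effectivity on irregular surfaces. Take $g\ge 2$ and $\mathcal{L}=\mathcal{N}\otimes\omega_B^{-1}$ with $\mathcal{N}\in\mathrm{Pic}^0(B)$ non-torsion, and normalize so that $\pi_*\mathcal{O}_X(1)=\mathcal{O}_B\oplus\mathcal{L}$. Then $\omega_X^{-m}\cong\mathcal{O}_X(2m)\otimes\pi^*\mathcal{N}^{-m}$, so $H^0(X,\omega_X^{-m})=\bigoplus_{i=0}^{2m}H^0(B,\mathcal{L}^{i}\otimes\mathcal{N}^{-m})=0$ for all $m\ge 1$, even though $-K_X\equiv 2C_0$. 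In this example $H\cdot K_X<0$ for every ample $H$. So the $m$-coefficient inequality is strict, and the constant terms of your Riemann--Roch comparison add nothing; your hoped-for use of them cannot exclude this surface.

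Hence nothing you have extracted from abundance suffices to finish. You need one of two things. The first is a precise theorem turning pseudo-effectivity into $\mathbb{Q}$-effectivity, with its hypotheses verified. By the example, on surfaces with $q>0$ such a statement can only be numerical, so the paper's citation also deserves checking. The second is further input from abundance beyond $H\cdot K_X\le 0$ for all ample $H$.
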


Finally, we give more examples of varieties possessing (semi-)abundant bundles.

\begin{thm}\label{thm5}
\noindent\begin{enumerate}
    \item The trivial vector bundle of rank $2$ on $\mathbb{P}^1\times \mathbb{P}^1$ is semi-abundant.
    \item If $C$ is a smooth projective curve which can be embedded in either $\mathbb{P}^2$ or $\mathbb{P}^1\times \mathbb{P}^1$. Then $\mathcal{O}_C^2$ is semi-abundant.
    \item If $C$ is a smooth projective curve in $\mathbb{P}^2$, then for all nonnegative integers $k$, $M_k|_{C}$ is semi-abundant. Here $M_k$ is defined in Theorem \ref{thm2}.
\end{enumerate}
\end{thm}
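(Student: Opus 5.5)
The plan is to reduce all three parts to known determinantal representation results for forms on $\mathbb{P}^1\times\mathbb{P}^1$ and $\mathbb{P}^2$, and then to restrict to curves. In each case semi-abundance only asks for one ample $H$, so we choose $H$ to make the linear systems as simple as possible.

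\emph{Part (1).} Take $H=\mathcal{O}(1,1)$. Then $\det \mathcal{O}^2(mH)=\mathcal{O}(2m,2m)$, and we need a general bihomogeneous form $F$ of bidegree $(2m,2m)$ to be, up to an automorphism, the determinant $\det(a_{ij})$ of a $2\times2$ matrix of bidegree $(m,m)$ forms. Here we would follow the strategy of \cite{LuJu}, a dimension count plus generic smoothness.
\begin{enumerate}
\item Consider the morphism $\Phi: H^0(\mathcal{O}(m,m))^{4}\to H^0(\mathcal{O}(2m,2m))$, $(a_{ij})\mapsto a_{11}a_{22}-a_{12}a_{21}$.
\item Show $\Phi$ is dominant by exhibiting one point where its differential is surjective. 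The differential at $(a_{ij})$ has image $\sum_{i,j} a_{ij}\,H^0(\mathcal{O}(m,m))$.
\item Choose the entries so that $(a_{11},a_{12},a_{21},a_{22})$ has no common zero, e.g.\ $s^m,t^m,u^m,v^m$ arranged so that the ideal they generate is irrelevant in bidegree $\geq (m,m)$. Then multiplication $H^0(\mathcal{O}(m,m))^4\to H^0(\mathcal{O}(2m,2m))$ is onto. This follows from Castelnuovo--Mumford regularity (the Koszul complex on the four sections is exact and the higher cohomology of the twists vanishes). Consequently $\Phi$ is dominant, and no automorphism is even needed.
\end{enumerate}
The surjectivity of the multiplication map is the only real check. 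We would verify it via the Koszul resolution and the Künneth formula.

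\emph{Part (2).} Let $C\subset S$ with $S=\mathbb{P}^2$ or $\mathbb{P}^1\times\mathbb{P}^1$, and take $H=\mathcal{O}_S(1)|_C$, respectively $\mathcal{O}(1,1)|_C$. For $m\gg0$ the restriction $H^0(S,\mathcal{O}_S(2mH))\to H^0(C,\mathcal{O}_C(2mH))$ is surjective, since $H^1(S,\mathcal{O}_S(2mH)-C)=0$. Surjective linear maps send general points to general points, so a general section on $C$ lifts to a general form $F$ on $S$. By \cite{LuJu} in the case of $\mathbb{P}^2$ (split bundles are abundant, here $\mathcal{O}^2$), or by part (1), $F$ is a $2\times2$ determinant of sections of $\mathcal{O}_S(mH)$. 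On $\mathbb{P}^2$ an automorphism may be needed, and we absorb it by replacing $C$ with a translate; this does not affect the intrinsic statement on $C$. The catch is that the automorphism must act on $C$. To avoid this we would instead use that on $\mathbb{P}^2$, $\mathcal{O}^2$ is homogeneous, so no automorphism is required and the statement of \cite{LuJu} holds for general $F$ directly. Restricting the matrix to $C$ then gives the degeneracy locus.

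\emph{Part (3).} We argue the same way using Theorem \ref{thm2}, since $M_k$ and $\mathcal{O}(1)$ are homogeneous, so no automorphism appears. For $m\gg0$ we have $H^0(\mathbb{P}^2,\det M_k(m))\to H^0(C,\det M_k(m)|_C)$ surjective, and $H^0(M_k(m))\to H^0(M_k(m)|_C)$ as well. Hence a general section downstairs is the restriction of a general section upstairs, which is the degeneracy locus of $\operatorname{rk} M_k$ sections of $M_k(m)$. Restricting those sections to $C$ gives the required sections of $M_k(m)|_C$.

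The main obstacle is part (1), namely the surjectivity of the differential. The remaining parts are formal restriction arguments once vanishing of $H^1$ for $m\gg0$ is noted.
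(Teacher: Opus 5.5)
Your strategy matches the paper's. In (1) you prove dominance of $(a_{ij})\mapsto a_{11}a_{22}-a_{12}a_{21}$ by finding one point where the differential, whose image is $\sum a_{ij}H^0(\mathcal{O}(m,m))$, is onto. Parts (2) and (3) are the paper's restriction lemma: lift a general section from $C$, represent it upstairs, and restrict, with homogeneity disposing of the automorphism. On $\mathbb{P}^2$ this works because every automorphism preserves $\mathcal{O}(1)$, which is what the paper's torsion-free Picard hypothesis secures. Parts (2) and (3) are fine.

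\textbf{The gap is in the step of (1) that you yourself call the only real check.} The regularity argument for surjectivity of $H^0(\mathcal{O}(m,m))^{4}\to H^0(\mathcal{O}(2m,2m))$ fails.
\begin{itemize}
\item Twisting the Koszul complex of sections $s_1,\dots,s_4$ of $L=\mathcal{O}(m,m)$ by $L^{2}$ gives $0\to L^{-2}\to (L^{-1})^{4}\to\mathcal{O}^{6}\to L^{4}\to L^{2}\to 0$.
\item The chase needs $H^1(\mathcal{O})=0$ and $H^2(L^{-1})=0$. But $H^2(\mathcal{O}(-m,-m))\cong H^0(\mathcal{O}(m-2,m-2))^{*}\neq 0$ for $m\geq 2$.
\item Carrying the chase through, the cokernel of multiplication is dual to the space of relations $\sum_i s_ih_i=0$ with $h_i$ of bidegree $(m-2,m-2)$.
\item Having no common zero does not exclude such relations. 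Take $s_1=ha$, $s_2=hb$ with $h$ of bidegree $(2,2)$, $(a,b)\neq(0,0)$, and $s_3,s_4$ general. These have no common zero, yet $bs_1-as_2=0$, so the map is not onto.
\end{itemize}
So ``no common zero plus regularity'' is not a valid criterion. Your phrase ``irrelevant in bidegree $\geq(m,m)$'' only restates what has to be shown. Also, if $s,t,u,v$ are the coordinates, then $s^m$ has bidegree $(m,0)$, not $(m,m)$.

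\textbf{The repair is short.} Take $a_{ij}=X_i^{m}Y_j^{m}$, as the paper does. Every monomial $X_0^{\alpha_0}X_1^{\alpha_1}Y_0^{\beta_0}Y_1^{\beta_1}$ of bidegree $(2m,2m)$ is divisible by some $X_i^mY_j^m$: since $\alpha_0+\alpha_1=2m$, some $\alpha_i\geq m$, and likewise some $\beta_j\geq m$.

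Alternatively, your Koszul--K\"unneth idea works for this product-type choice if you apply it factor by factor. On $\mathbb{P}^1$, the sequence $0\to\mathcal{O}\to\mathcal{O}(m)^{2}\to\mathcal{O}(2m)\to 0$ given by $(X_0^m,X_1^m)$ only needs $H^1(\mathcal{O}_{\mathbb{P}^1})=0$. Under K\"unneth, the multiplication map on $\mathbb{P}^1\times\mathbb{P}^1$ is the tensor product of the two resulting surjections.

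With either fix, (1) goes through, and with it the whole theorem.
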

 
\section{Proof of The Theorem \ref{thm1}}

For $n\geq 0$, we have an exact
sequence 
 \begin{equation}\label{euler}
 0 \to {\mathcal O}_{\mathbb{P}^2}(n-1) \to {\mathcal O}_{\mathbb{P}^2}(n)^2\oplus{\mathcal O}_{\mathbb{P}^2}(n+1) \to N(n) \to 0,
 \end{equation} 
of vector bundles on $\mathbb{P}^2,$ where the first map is given 
by $f\mapsto (f.x,f.y,f.z^2).$
Since $H^1(\mathcal{O}_{\mathbb{P}^2}(n-1))=0,$
we conclude that the map 
$$H^0({\mathcal O}_{\mathbb{P}^2}(n)^2\oplus{\mathcal O}_{\mathbb{P}^2}(n+1)) \to H^0(N(n)) $$
is surjective. Thus every section of  $N(n)$ comes from a section of $\mathcal {O}_{\mathbb{P}^2}(n)^2\oplus{\mathcal O}_{\mathbb{P}^2}(n+1)$
 \begin{Def}\label{generic} For a vector bundle $F$ on $\mathbb{P}^2$ a two dimensional subspace $V$ of $H^0(\mathbb{P}^2, F(n))$ is called generically point-wise
 linearly independent  (GPLI, for short), if there is a point $p \in {\mathbb{P}^2}$ and 
 $v_1, v_2 \in V$ such that the tangent vectors $v_1(p) $ and $ v_2 (p) $ are independent.
 \end{Def}
 
 \begin{exam} The sequence \eqref{euler} gives an exact sequence of vector spaces :
 $$ 0 \to H^0({\mathcal O}_{\mathbb{P}^2}(n) ) \to H^0({\mathcal O}_{\mathbb{P}^2} (n+1)^3) \to H^0(T_{\mathbb{P}^2}(n) )\to 0.$$
 The image of sections $(X^{n+1},0,0)$ and $(Y^{n+1},0,0)$  in  $ H^0(T_{\mathbb{P}^2}(n+1) )$ are linearly independent 
 but not generically point-wise linearly independent.
 \end{exam} 
 
 \begin{rem}
 a)  If $v_1$ and $v_2$ are two linearly independent section of $N(n)$ such that the subspace $V = <v_1,v_2>$ 
 of $H^0(\mathbb{P}^2, N(n))$  is {\em not} GPLI
then the  vector space  $\wedge^2(V)$ maps to zero in $H^0(\mathbb{P}^2, \OOO_{\mathbb{P}^2}(2n+2))$

b) Let 
$$U = \{  [V] \in Gr(2, H^0(N(n))) \,\, | \,\, V  \, \text{is a GPLI} \} .$$ 
Then $U$ is an open  subset of  $Gr(2, H^0(N(n))) :$ 

In fact, if $G = Gr(2, H^0(N(n))),$ then the complement of $U$ is the base locus of the linear system determined by the subspace $H^0(\mathbb{P}^2, \mathcal{O}_{\mathbb{P}^2}(2n+2))^*$ 
in  $H^0(G, \mathcal{O}_G(1))\simeq \wedge^2(H^0(N(n)))^*.$
\end{rem}

Thus $[V] \mapsto [\wedge^2(V)]$ defines a rational map 
\[ \Psi_n' : Gr(2, H^0(N(n))) \cdots\to \mathbb{P}(H^0(\mathbb{P}^2, \OOO_{\mathbb{P}^2}(2n+2))). \]
which is defined exactly on the open set $U.$ This map can be described as follows:
If $[V] \in U$ and $(f_1,f_2,f_3)$ and $(g_1,g_2,g_3)$ are two sections of the 
vector bundle ${\mathcal O}_{\mathbb{P}^2}(n)^2\oplus{\mathcal O}_{\mathbb{P}^2}(n+1) $ that maps to a basis of the vector
space $V$ then using sequence (\ref{euler}) we can identify the curve defined by 
$\wedge^2(V)$ with the zero locus of the curve defined by the determinant of the matrix
\[ \begin{pmatrix}
g_1 & g_2 & g_3\\
f_1 & f_2 & f_3\\
x & y & z^2
\end{pmatrix}
\]
Hence it suffices to show that for a general curve $C$ of degree $2n+2$ in $\mathbb{P}^2$, there is an automorphism $\phi$ of $\mathbb{P}^2$ such that $\phi^*(C)$ is the zero locus of the 
 determinant of a matrix of the form 

 \[ \begin{pmatrix}
g_1 & g_2 & g_3\\
f_1 & f_2 & f_3\\
x & y & z^2
\end{pmatrix}
\] where $g_1, g_2, f_1,f_2$ are all homogeneous polynomials  of degree $n$, $g_3$ and $f_3$ are homogeneous of degree $n+1$. Now {\cite[Main Theorem]{LuJu}}
 says that a general plane curve $C$ of degree $2n+2$ is the zero locus of the 
 determinant of a matrix of the form 
 \[ \begin{pmatrix}
g_1 & g_2 & g_3\\
f_1 & f_2 & f_3\\
l & m & Q
\end{pmatrix}
\]
where $g_i$ and $f_i$ are as above, $l$ and $m$ are homogeneous linear polynomials, 
and $Q$ is a homogeneous quadratic polynomial. As $C$ is general we can assume that 
$l$ and $m$ are linearly independent, and $Q$ is not in the ideal $(l,m)$ of 
$k[x,y,z]$. So, replacing $C$ by $\phi^*(C)$ for an automorphism $\phi$ of 
$\mathbb{P}^2$, we may assume that $l=x$ and $m=y.$ Now by adding a polynomial linear 
combination of first and second column to the third column (which does not change the
determinant), we may assume $Q=az^2$ for some $a\in k$. As $Q\not\in (l,m)$, we have
$a\neq 0$. So, dividing the third column by $a$, we get the result. This completes the 
proof of Theorem (\ref{thm1}).
$\hfill{\square}$
\begin{rem}
    Composing $\Psi_n'$ by the action of $\textrm{Aut }\mathbb{P}(H^0(\mathbb{P}^2, \OOO_{\mathbb{P}^2}(2n+2))^*)$ yields a rational map

\[
 \Psi_n : Gr(2, H^0(N(n)))\times \textrm{Aut}(\mathbb{P}^2)
  \cdots\to \mathbb{P}(H^0(\mathbb{P}^2, \OOO_{\mathbb{P}^2}(2n+2))^*).  
 \] We have shown in the proof that in fact $\Psi_n$ is dominant for all $n\geq 0$, not just for only sufficiently large $n.$ 
\end{rem} 
\begin{rem} Note that the bundle $N$ is a restriction to a hyperplane  of the Null correlation bundle on $\mathbb{P}^3,$ constructed and studied by W. Barth \cite{Ba}. The Null correlation bundle is the rank-two bundle on $\mathbb{P}^3$ with the property of being stable but only semi-stable when restricted to any linear subspace, which is unique up to automorphisms of $\mathbb{P}^3$ and up to tensoring by line bundles.
\end{rem}
 \section{Proof of The Theorem \ref{thm2}}

We have an exact
sequence 
 \begin{equation*}
 0 \to {\mathcal O}_{\mathbb{P}^2}(-k) \to H^0(\mathbb{P}^2,\mathcal{O}_{\mathbb{P}^2}(k))^*\otimes_k \mathcal{O}_{\mathbb{P}^2} \to M_k \to 0.
 \end{equation*}
 Twisting by $\mathcal {O}_{\mathbb{P}^2}(n)$ gives the exact sequence

 \begin{equation}\label{syzygy}
 0 \to {\mathcal O}_{\mathbb{P}^2}(n-k) \to H^0(\mathbb{P}^2,\mathcal{O}_{\mathbb{P}^2}(k))^*\otimes_k \mathcal{O}_{\mathbb{P}^2}(n) \to M_k (n)\to 0.
 \end{equation}

Let $m=h^0(\mathbb{P}^2,\mathcal{O}_{\mathbb{P}^2}(k))=\binom{k+2}{2},$ and 
$\{P_1,P_2,...,P_m\}$ a basis of $H^0(\mathbb{P}^2,\mathcal{O}_{\mathbb{P}^2}(k)).$
Hence we can identify \ref{syzygy} as a short exact sequence 

\begin{equation}
 0 \to {\mathcal O}_{\mathbb{P}^2}(n-k) \to  \mathcal{O}_{\mathbb{P}^2}(n)^m \to M_k (n)\to 0,
 \end{equation}

where the first map is given by $f\mapsto (fP_1,fP_2,\cdots fP_m)$.

Now we finish the proof in the same way as in Theorem \ref{thm1}. By \cite{LuJu}, for each $n\geq 0$ a general curve $C$ of degree $(m-1)n+k$ is the zero locus of the determinant of an $m\times m$ matrix $A$ of homogeneous polynomials such that each entry in the last row has degree $k$, and all other entries has degree $n$. Since $C$ is general,we can assume that the polynomials in the last row of $A$ are linearly independent, hence form 
a basis of $H^0(\mathbb{P}^2,\mathcal{O}_{\mathbb{P}^2}(k))$. So, post-multiplying $A$ by an element of $GL_m(\mathbb{C})$, (which changes the determinant only up to a nonzero scalar multiple), we can assume the last row of $A$ is $(P_1,P_2,...,P_m).$ This completes the proof in the same way as in the proof of Theorem \ref{thm1}
 $\hfill{\square}$

 \begin{rem}
    For $k\geq 1, n\geq 0$, we have a rational map

\[
 T_{k,n} : Gr(m-1, H^0(M_k(n)))
  \cdots\to \mathbb{P}(H^0(\mathbb{P}^2, \OOO_{\mathbb{P}^2}((m-1)n+k))^*).  
 \] We have shown in the proof that in fact $T_{k,n}$ is dominant for all $n\geq 0$, 
 not just for only sufficiently large $n.$ Also, the automorphism as in the 
 definition of abundant vector bundle is not needed here. This last fact is anyway 
 clear, as $M_k$ is homogeneous vector bundle.
\end{rem}

\textbf{Proof of Corollary \ref{tgt}:}

Note that the tangent and cotangent bundles are line bundle twists of $M_1$.
$\hfill{\square}$

\textbf{Proof of Corollary \ref{cor1}:}

Note that $det (T_{\mathbb{P}^2}(n)) $ is equal to $ \OOO_{\mathbb{P}^2}(2n+3)$ and 
$det(N(n))$ is equal to $ \OOO_{\mathbb{P}^2}(2n+2).$ For any integer $d \geq 1,$ set
$E_d = N(k),$  if $d=2k$ and $E_d= T_{\mathbb{P}^2}(k-1)$ if $d=2k+1.$ Then the result
follows at once 
from Corollary (\ref{tgt}) and the proof of Theorem (\ref{thm1}). 
$\hfill{\square}$

In the remaining of this section, we discuss some consequence and examples related to
the abundance of $T_{\mathbb{P}^2}.$ For $n\geq -1$, let 
$$\Phi_n=T_{1,n+1}: Gr(2, H^0(T_{\mathbb{P}^2}(n))) \cdots\to \mathbb{P}
(H^0(\mathbb{P}^2, \OOO_{\mathbb{P}^2}(2n+3))).$$ 
So,$\Phi_n$ is dominant for all $n\geq -1.$ 

 Note that if $W$ is space of all plane  curves of degree $2n+3,$ then  for 
 $C \in W,$  the tangent space 
 $$T_{C}(W)=H^0(\mathcal O_{C}(2n+3)).$$ 
  (More generally if $D$ is divisor on a variety $X$  then $\mathcal{O}_D(D)$ 
  is the normal bundle of $D$ in $X$ and the Zariski tangent space to the scheme of 
  all divisors linearly equivalent to $D$ on $X$ can identified with 
  $H^0(X,\mathcal{O}_D(D))$).
   
Let $V$ be a two dimensional GPLI  subspace of 
 $H^0(T_{\mathbb{P}^2}(n)).$  
The Zariski tangent space of the Grassmannian 
$Gr(2, H^0(T_{\mathbb{P}^2}(n))$ at $V$ can be identified with 
$\text{Hom}_{\mathbb{C}}(V , H^0(T_{\mathbb{P}^2}(n))/V)$
and tangent space to\\ 
$\mathbb{P}(H^0(\mathbb{P}^2, \mathcal{O}_{\mathbb{P}^2}(2n+3)))$ at a
curve $C$ of degree $2n+3$ can be  identified with 
$H^0(C, \mathcal{O}_{\mathbb{P}^2}(2n+3)|_C)$
 \begin{prop}\label{tangent} The tangent map 
 $$(d\Phi_n)_{[V]} :\text{Hom}_{\mathbb{C}}(V , H^0(T_{\mathbb{P}^2})/V)) \to H^0(C, 
 \mathcal{O}_{\mathbb{P}^2}(2n+3)|_C) $$
 is given by 
 $$\varphi \mapsto (v_1 \wedge \varphi(v_2) - v_2 \wedge \varphi(v_1))|_C,$$
  where $v_1, v_2 \in V$ is any basis of $V.$
 \end{prop}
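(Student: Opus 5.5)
The plan is to compute the differential directly from the description of $\Phi_n$ as $[V]\mapsto[\wedge^2 V]$, using the standard chart of the Grassmannian and the product rule for the wedge product. The map factors as
\[
Gr(2,H^0(T_{\mathbb{P}^2}(n)))\cdots\to \mathbb{P}(\wedge^2 H^0(T_{\mathbb{P}^2}(n)))\cdots\to \mathbb{P}(H^0(\mathcal{O}_{\mathbb{P}^2}(2n+3))),
\]
namely the Plücker embedding followed by the linear projection induced by the natural map $\wedge^2 H^0(T_{\mathbb{P}^2}(n))\to H^0(\wedge^2 T_{\mathbb{P}^2}(2n))=H^0(\mathcal{O}_{\mathbb{P}^2}(2n+3))$, $s\wedge t\mapsto s\wedge t$ pointwise. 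This projection is defined at $[V]$ precisely because $V$ is GPLI, by part (a) of the earlier Remark. Writing $W=H^0(T_{\mathbb{P}^2}(n))$, the tangent map is therefore the composite of the differential of the Plücker map with the linear map $\wedge^2W\to H^0(\mathcal{O}(2n+3))$, followed by the identification of the tangent space of $\mathbb{P}(H^0(\mathcal{O}(2n+3)))$ at $C=\{s=0\}$, with $s=v_1\wedge v_2$. That identification is $H^0(\mathcal{O}(2n+3))/\mathbb{C}s\cong H^0(\mathcal{O}_C(2n+3))$, obtained by restricting to $C$. It uses the sequence $0\to\mathcal{O}\xrightarrow{s}\mathcal{O}(2n+3)\to\mathcal{O}_C(2n+3)\to0$ together with $H^1(\mathcal{O}_{\mathbb{P}^2})=0$.

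First, I will parametrize a neighbourhood of $[V]$. Fix a basis $v_1,v_2$ of $V$ and a complement $V'$ with $W=V\oplus V'$. A tangent vector $\varphi\in\mathrm{Hom}(V,W/V)\cong\mathrm{Hom}(V,V')$ corresponds to the curve $V_t=\langle v_1+t\varphi(v_1),\,v_2+t\varphi(v_2)\rangle$ over $\mathbb{C}[t]/(t^2)$. Then
\[
(v_1+t\varphi(v_1))\wedge(v_2+t\varphi(v_2))=v_1\wedge v_2+t\bigl(v_1\wedge\varphi(v_2)+\varphi(v_1)\wedge v_2\bigr).
\]
Since $\varphi(v_1)\wedge v_2=-v_2\wedge\varphi(v_1)$, this yields the first-order deformation $s+t\,(v_1\wedge\varphi(v_2)-v_2\wedge\varphi(v_1))$ of the section $s$. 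Its image in the tangent space $H^0(\mathcal{O}(2n+3))/\mathbb{C}s$ is the class of the $t$-coefficient, which is the restriction to $C$, as claimed.

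Finally, I will check well-definedness. The lift $\varphi(v_i)\in W$ is determined only modulo $V$, and changing it by an element of $V$ alters the $t$-coefficient by a multiple of $v_1\wedge v_2=s$. This difference vanishes on $C$. Changing the basis $v_1,v_2$ by a matrix $g$ scales $s$ by $\det g$, and it scales the derivative by the same factor modulo $s$. So the restricted section is independent of choices, up to the scalar inherent in identifying $T_C\mathbb{P}$ with $H^0(\mathcal{O}_C(2n+3))$. The main point needing care is this identification of tangent spaces of the projective space with $H^0(\mathcal{O}_C(2n+3))$, i.e. confirming that the normalization is compatible with the choice of $s$. I expect it to be routine once one fixes the affine representative $s$ of $C$.
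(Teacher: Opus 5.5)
Your proposal is correct and follows the same route as the paper: expand $(v_1+t\varphi(v_1))\wedge(v_2+t\varphi(v_2))$ to first order in $t$, and observe that a change of basis by $M$ rescales both the $t$-coefficient and the defining equation of $C$ by $\det M$. Your extra checks fill in details the paper leaves implicit: the independence of the lift of $\varphi(v_i)$ modulo $V$, and the identification $H^0(\mathcal{O}_{\mathbb{P}^2}(2n+3))/\mathbb{C}s\cong H^0(\mathcal{O}_C(2n+3))$ via $H^1(\mathcal{O}_{\mathbb{P}^2})=0$.
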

 
  {\bf Proof:} 
  Let $V =\langle v_{1},v_{2}\rangle  \subset  H^0(T_{\mathbb{P}^2}) $  be a GPLI.  
  An infinitesimal variation of the subspace 
  $V = \langle v_1, v_2\rangle $ is of the form 
  $$V_{\varphi}= \langle v_1+t\varphi(v_{1}) , v_2+t\varphi(v_{2})\rangle$$
   for some $\varphi \in Hom(V,H^0(T_{\mathbb{P}^2})/V),$  
  and $(v_{1}+t\varphi(v_{1}))\wedge(v_{2}+t\varphi(v_{2}))=v_{1}\wedge
  v_{2}+t(v_{1}\wedge \varphi(v_{2})-v_{2}\wedge \varphi(v_{1}))+\cdots.$ 
 
  If we choose another basis of $V$ like,
  $\tilde{v_{1}}=\alpha_{11}v_{1}+\alpha_{21}v_{2}$, 
  $\tilde{v_{2}}=\alpha_{12}v_{1}+\alpha_{22}v_{2}$, 
  $ \tilde{v_{1}}\wedge \varphi(\tilde{v_{2}})-\tilde{v_{2}}\wedge \varphi(\tilde{v_{1}})= detM(v_{1}\wedge \varphi(v_{2})-v_{2}\wedge \varphi(v_{1}))$ where 
  $M=\left ( \begin{array}{cc}\alpha_{11}&\alpha_{12}\\ \alpha_{21}&\alpha_{22}\\ \end{array}\right).$ The defining equation of $C$ is also changed by $det(M).$
  Thus we see that 
 $$(d\Phi_n)_{[V]}(\varphi)= (v_1 \wedge \varphi(v_2) - v_2 \wedge \varphi(v_1))|_C.$$  $\hfill{\square}$
 \begin{corol}\label{cor2}
  If $[V] \in Gr(2, H^0(T_{\mathbb{P}^2}(n))$ is general then the tangent map 
 $$(d\Phi_n)_{[V]} :\text{Hom}_{\mathbb{C}}(V , H^0(T_{\mathbb{P}^2})/V)) \to H^0(C, \mathcal{O}_{\mathbb{P}^2}(2n+3)|_C).$$  
 \end{corol}
{\bf Proof:} Over the field of complex numbers, morphism of non-singular varieties is generically smooth (See {\cite[Chapter III, Corollary 10.7]{Ha}}) and hence  submersion when it is dominant. This proves the corollary. $\hfill{\square}$

\begin{rem}\label{rem1}
 If $f : X \cdots\to Y $ is rational map of non-singular projective varieties and at a point $p \in X$
  the tangent map $Tf_p : T_p \to T_{f(p)}$ is surjective, then it follows
  that the morphism $f$ is dominant (See {\cite[Chapter III, \S 10]{Ha}}).
\end{rem}
Using the Remark \ref{rem1} we can get an alternating  proof of abundance of $T_{\mathbb{P}^2}$ if we can produce one two dimensional GPLI subspace $V$ of
$H^0(T_{{\mathbb{P}}^2}(n))$   for which the linear map 
$$ 
(d\Phi_n)_V: Hom(V,H^0(T_{\mathbb{P}^2}(n)/V)) \to H^0(\mathcal O_{C}(2n+3)) $$ 
defined by 
$$ \varphi \mapsto (v_{1}\wedge \varphi(v_{2})-v_{2}\wedge \varphi(v_{1})|_V$$ is surjective.

For $n=0$ we have the following example:

{\bf Example 1):}
Take $v_{1},v_{2} \in H^0(T_{\mathbb{P}^2})$ where $v_{1}=\psi(x,2y,3z)$ $v_{2}=\psi(y,z,x)$, where  
$$\psi: H^0(\mathcal O_{\mathbb{P}^2}(1)^3) \to H^0( T_{\mathbb{P}^2} )$$
is the linear map obtained from the Euler exact sequence. If $V$
is the two dimensional subspace of $H^0( T_{\mathbb{P}^2} )$ generated by $v_{1}$ and $v_{2},$
then the equation of the curve $C=\Phi_0(V)$ is
\[
F=  \left |   \begin{array}{ccc}x&2y&3z\\y&z&x\\x&y&z\\ \end{array} \right |  = x^2y-2xz^2+y^2z. 
\] 
It is easy to see that   $C=Z(F)$ is a non-singular cubic of $\mathbb{P}^2$ and 
$$H^0(\mathcal O_{C}(3))=\langle x^3,y^3,z^3,xyz,x^2z,y^2z,xy^2,xz^2,yz^2\rangle .$$ 
We claim that 
the linear map 
\[ (d\Phi_0)_V: Hom(V,H^0(T_{\mathbb{P}^2})/V) \to H^0(\mathcal O_{C}(3)) 
\]
is surjective:
For, if $\varphi \in Hom(V,H^0(T_{\mathbb{P}^2})/V) $ then we have 
$\varphi(\psi(v_{1}))=\psi(f_{1},f_{2},f_{3})$ modulo $V$ and 
$\varphi(\psi(v_{2}))=\psi(g_{1},g_{2},g_{3})$ modulo $V,$ for some 
$f_i,g_i \in H^0(\mathcal O_{\mathbb{P}^2}(1)), \,\, (1\leq i \leq 3).$ 
 If 
 $(g_1,g_2,g_3) = (0,0,0)$ and $(f_{1},f_{2},f_{3})=(0,x,0)$ 
 (resp. $(0,0,y)$, $(z,0,0)$) we obtain $\varphi_{1}$ 
 (resp. $ \varphi_{2}\,$,$\,\varphi_{3}$) such that  
 $(d\Phi_0)_V(\varphi_{1})= xyz-x^3,$
(resp. $(d\Phi_0)_V(\varphi_{2})= -y^3+xyz$, 
$(d\Phi_0)_V(\varphi_{1})= -z^3 +xyz$) up to a non-zero scalar. 
If $\varphi_{4}$ (resp. $\varphi_{5}$, $\varphi_{6}$,
$\varphi_{7}$, $\varphi_{8}$, $\varphi_{9}$)is given by 
$(f_1,f_2,f_3)=(0,0,0)$ and 
$(g_{1},g_{2},g_{3})=(y,0,0)$ (resp.$(z,0,0),\,(0,x,0),\,\\ (0,y,0),\,(0,z,0)$,
$\,(0,0,y) $) then 
$(d\Phi_0)_V(\varphi_{4})$ $=y^2z$ 
(resp. $(d\Phi_0)_V(\varphi_{5})=z^2y$,
$(d\Phi_0)_V(\varphi_{6})= x^2z$, $(d\Phi_0)_V(\varphi_{7}) =xyz$,
 $(d\Phi_0)_V(\varphi_{8})= z^2x$, $(d\Phi_0)_V(\varphi_{9})=y^2x$) up to
a non-zero scalar.
$\hfill{\square}$

{\bf Example 2):}
Let
\[ 0 \to O_{\mathbb{P}^2}(-1) \stackrel{(x,y,z^2)}{\longrightarrow} 
O_{\mathbb{P}^2}^2 \oplus O_{\mathbb{P}^2}(1) \stackrel{\psi}{\to} 
N \to 0 \]
be the exact sequence of vector bundle on ${\mathbb{P}^2}.$
Take $u_{1},u_{2} \in H^0(N)$ where $u_{1}=\psi(0, 1, y)$ 
$u_{2}=\psi(1,0,x),$ 
Let $C$ be the zero locus of the section $u_1\wedge u_2$ of the line bundle $det(N) = {\mathcal O}_{\mathbb{P}^2}(2).$
If  
\[ 
F=  \left |  \begin{array}{ccc}x& y& z^2\\ 0& 1 & y\\ 1& 0 & x\\ \end{array} \right |  = x^2 +y^2-z^2,
\] 
then $C=V(F)$ and it is a smooth conic in $\mathbb{P}^2.$
$$H^0(\mathcal O_{C}(2))=\langle x^2,y^2,xy,xz,yz\rangle ,$$ 
As in Example 1 by evaluating at different choices of elements of 
$Hom(V, H^0(N)/V)$ we can show that the linear map
\[
(d\Psi_0)_V : Hom(V, H^0(N)/V) \to H^0(C,\mathcal{O}_C(2))
\]
is surjective. $\hfill{\square}$

\section{An algebraic application}
Let $\mathsf{m}$ be the maximal ideal of $\mathbb{C}[x,y,z]$ generated 
by $x,y,z.$ If $S$ a finite set of homogeneous polynomials of degree $d$
such that the ideal $I$ generated by $S$ is $\mathsf{m}$-primary (i.e.,
$\sqrt{I}= \mathsf{m}$), then there is an integer $k_0$ such that for
all integer $k \geq k_0$ the ideal $\mathsf{m}^k$ is contained in $I.$
It is interesting to find an upper bound for smallest $k_0$ in terms of 
$d.$ In general, such bounds are obtained by effective nullstellensatz,
see for example \cite{kollar1988sharp}. For some special choices of 
homogeneous polynomials, one can get better bounds. Theorem 
(\ref{thm33}) below will provide such an upper bound for some special
sets of homogeneous polynomials.

Let $n \geq 0$ be an integer. 
Let $f_1, f_2, f_3$ and $g_1, g_2, g_3$ be homogeneous polynomials of
degree $n+1$ in three variables $x,y,z$ over the field of complex 
numbers. Assume that the subsets 
of $\mathbb{P}^2$ for which 
\[Z(f_1y-f_2x, f_1z-f_3x,f_2z-f_3y) \]
and
\[ Z(g_1y-g_2x, g_1z-g_3x,g_2z-g_3y) \]
 are disjoint.  Set $ U$ to be the subspace of 
$H^0(\mathcal{O}_{\mathbb{P}^2}(n+2))$ generated by the homogeneous polynomials
\[f_1y-f_2x, f_1z-f_3x,f_2z-f_3y, g_1y-g_2x, g_1z-g_3x,g_2z-g_3y.\] 

\begin{thm}\label{thm33}
 The linear map 
  \[ F_n : H^0(\mathcal{O}_{\mathbb{P}^2}(n+1)) \otimes U \to H^0(\mathcal{O}_{\mathbb{P}^2}(2n+3))\]
  induced by multiplication $ (f , h) \mapsto f.h$ is surjective for a generic choice of 
  $f_1, f_2, f_3$ and $g_1, g_2, g_3.$
\end{thm}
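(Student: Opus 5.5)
Our plan is to identify $F_n$ with the tangent map of $\Phi_n$ from Proposition \ref{tangent}, and then use that $\Phi_n$ is dominant (proved in Section 3 via Theorem \ref{thm2}) together with generic smoothness (Corollary \ref{cor2}).

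Let $\psi : H^0(\mathcal{O}_{\mathbb{P}^2}(n+1)^3) \to H^0(T_{\mathbb{P}^2}(n))$ be the surjection coming from the twisted Euler sequence, and set $v_1=\psi(f_1,f_2,f_3)$, $v_2=\psi(g_1,g_2,g_3)$. The $2\times 2$ minors of $\begin{pmatrix} f_1&f_2&f_3\\ x&y&z\end{pmatrix}$ are exactly $f_1y-f_2x$, $f_1z-f_3x$, $f_2z-f_3y$. So the zero scheme of $v_1$ in $\mathbb{P}^2$ is $Z(f_1y-f_2x,f_1z-f_3x,f_2z-f_3y)$, and similarly for $v_2$. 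The hypothesis therefore says that $v_1$ and $v_2$ have disjoint zero loci; in particular $V=\langle v_1,v_2\rangle$ is GPLI.

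The key computation is as follows. For a section $w=\psi(h_1,h_2,h_3)$ of $T_{\mathbb{P}^2}(n)$, the section $v_1\wedge w$ of $\mathcal{O}(2n+3)$ is the determinant with rows $(f_i)$, $(h_i)$ and $(x,y,z)$. Expanding this determinant along the row $(h_i)$ gives
\[ v_1\wedge w = \pm\big(h_1(f_2z-f_3y) - h_2(f_1z-f_3x) + h_3(f_1y-f_2x)\big). \]
Hence, as $w$ ranges over $H^0(T_{\mathbb{P}^2}(n))$, the sections $v_1\wedge w$ range over $H^0(\mathcal{O}(n+1))\cdot\langle f_1y-f_2x,\,f_1z-f_3x,\,f_2z-f_3y\rangle$, and similarly for $v_2$. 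Consequently the image of $F_n$ equals
\[ \{\, v_1\wedge w_2 - v_2\wedge w_1 \;:\; w_1,w_2\in H^0(T_{\mathbb{P}^2}(n))\,\}. \]

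We now compare this with $d\Phi_n$. By Proposition \ref{tangent}, the image of $(d\Phi_n)_{[V]}$ is this same space taken modulo $F=v_1\wedge v_2$, i.e.\ after restricting to $C$. Since $F=v_1\wedge v_2$ itself lies in the image of $F_n$, and $H^0(\mathcal{O}(2n+3))\to H^0(\mathcal{O}_C(2n+3))$ is surjective with kernel $\mathbb{C}F$, surjectivity of $(d\Phi_n)_{[V]}$ implies surjectivity of $F_n$. Corollary \ref{cor2} (dominance plus generic smoothness) gives surjectivity of $(d\Phi_n)_{[V]}$ for general $[V]$. A general pair $(f,g)$ maps to a general point of the Grassmannian, since $\psi$ is surjective, so the generic choice of $f_i,g_i$ suffices.

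The main obstacle is justifying the use of generic smoothness. The map $\Phi_n$ is only a rational map, so we restrict it to the open set $U$ where it is a morphism between smooth varieties. Dominance of $\Phi_n$ from Theorem \ref{thm2} and the remark following its proof then makes Corollary \ref{cor2} applicable at a general point of $U$, which is exactly the statement needed.
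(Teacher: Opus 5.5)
Your proposal is correct and follows the paper's proof. In both, the cofactor expansion identifies $\mathrm{Im}\,F_n$ with $\{v_1\wedge w_2-v_2\wedge w_1\}$, and surjectivity of $F_n$ is transferred from $(d\Phi_n)_{[V]}$, which is surjective for general $V$ because $\Phi_n$ is dominant and generic smoothness applies.

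You are in fact more careful than the paper in two places. First, you check that the kernel $\mathbb{C}F$ of restriction to $C$ already lies in $\mathrm{Im}\,F_n$, which is exactly what the direction ``$(d\Phi_n)_{[V]}$ surjective $\Rightarrow F_n$ surjective'' needs. Second, you cite Corollary \ref{cor2} rather than Remark \ref{rem1}, which is the converse implication.
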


{\bf Proof:} If $f_1, f_2, f_3$ and $g_1, g_2, g_3$ are general then 
images of 
$(f_1, f_2, f_3)$ and $(g_1, g_2, g_3)$ in $T_{\mathbb{P}^2}(n)$
under the map 
\[{\mathcal O}_{\mathbb{P}^2}(n+1)^3 \to T_{\mathbb{P}^2}(n) \to 0\]
in (\ref{euler}) gives a two dimensional subspace $V$ of
$H^0(T_{\mathbb{P}^2}(n))$
for which the the tangent map
\[
(d\Phi_n)_V: Hom(V,H^0(T_{\mathbb{P}^2}(n)/V)) \to H^0(\mathcal O_{C}
(2n+3))  \]
is surjective (see, Remark (\ref{rem1})). If $W$ is the two dimensional
subspace of
$$H^0({\mathcal O}_{\mathbb{P}^2}(n+1)^3)$$
generated by $v_1=(f_1, f_2, f_3)$ and $v_2=(g_1, g_2, g_3)$ then we see
that there a commutative diagram
\[\begin{array}{ccc}
Hom(W,H^0(\mathcal{O}_{\mathbb{P}^2}(n+1))^3))& \stackrel{F_n}
{\longrightarrow} & H^0(\mathcal O_{\mathbb{P}^2}(2n+3))\\
\downarrow & {} & \downarrow \\
Hom(V,H^0(T_{\mathbb{P}^2}(n)/V))& \stackrel{(d\Phi_n)_V}
{\longrightarrow} &H^0(\mathcal O_{C}(2n+3))
\end{array}\]
of linear maps, where $F_n(\phi) = v_1 \wedge \phi(v_2) - v_2 \wedge 
\phi(v_1),$
first vertical arrow is obtained from identifying $W$ and $V$ and second
vertical arrow
is the natural restriction. Note that if 
\[\phi(v_1) = (a_{11}, a_{12}, a_{13}) \in H^0(\mathcal{O}_{\mathbb{P}^2}
(n+1))^3 \]
\[\phi(v_2) = (a_{21}, a_{122}, a_{23}) \in 
H^0(\mathcal{O}_{\mathbb{P}^2}(n+1))^3 \]
then 
\[ \begin{array}{ccc}
 v_1\wedge (\phi(v_2))  & = &\begin{vmatrix}
                            a_{21} & a_{22} & a_{23}\\
                            f_1  & f_2 & f_3 \\
                            x & y & z
                            \end{vmatrix}   
\end{array} \]
\[
= a_{13}(f_2 z-f_3 y) -a_{22}(f_1 z-f_3 x) +a_{12}(f_2 z-f_3 y) 
\]
and 
\[ \begin{array}{ccc}
 v_2\wedge (\phi(v_1))  & = &\begin{vmatrix}
                            a_{11} & a_{12} & a_{13}\\
                            g_1  & g_2 & g_3 \\
                            x & y & z
                            \end{vmatrix}   
\end{array} \]
\[
= a_{23}(g_2 z-g_3 y) -a_{22}(g_1 z-g_3 x) +a_{12}(g_2 z-g_3 y). 
\]
Using these observations it is easy to see that surjectivity of $F_n$ is 
equivalent to surjectivity of $(d\Phi_n)_V.$  $\hfill{\square}$

\begin{rem}
    In general for large integer $n$ it is possibly difficult to find an 
    explicit example of  
    a two dimensional subspace $V$ of $H^0(\mathcal{O}_{\mathbb{P}^2}
    (2n+3))$ 
    obtained from  homogeneous polynomials with rational coefficients as 
    in Example 
    1), for which the the linear map 
    \[(d\Phi_n)_V: Hom(V,H^0(T_{\mathbb{P}^2}(n)/V)) \to 
    H^0(\mathcal O_{C}(2n+3))  \]
    is surjective, where $C$ is the degeneracy loci of $V.$ 
    On the other hand it is 
    easy to find example of $V$ for which $(d\Phi_n)_V$ is not 
    surjective. For example, assume  $2n+3 = 3k$ for some 
    positive integer $k$ and take $V$ to be
    the subspace of $H^0(T_{\mathbb{P}^2}(n))$ generated by the images 
    of $v_1= (z^{n+1}, x^{n+1}, 0), v_2= (0, z^{n+1}, y^{n+1}).$ 
    Then it can be seen that
    image of the multiplication map $F_n$  does not contain elements of
    the type
    $x^ky^kz^k, x^{k+1}y^kz^{k-1},$ when $k$ is large 
    enough (here $F_n$ as in Theorem(\ref{thm3})). 
    Hence we can conclude that $(d\Phi_n)_V$ is not surjective.
    \end{rem}
\section{Proof of Theorem \ref{thm3}}
 Note that $\textrm{det}(E_r(n))=\mathcal{O}_{\mathbb{P}^2}(rn+2)$. 
 By \cite{LuJu}, for each $n\geq 0$, a general curve $C$ 
 of degree $rn+2$ is the zero locus of the 
 determinant of a $(r+2)\times(r+2)$ matrix $A$ of homogeneous
 polynomials $((f_{i,j}))_{1\leq i,j\leq r+2},$ 
 with $\deg f_{i,j}=n$ if $1\leq i\leq r$, and 
 $\deg f_{i,j}=1$ if $r+1\leq i\leq r+2$. Let 
 $\underline{f_1},\underline{f_2},...,\underline{f_r}
 \in H^0(\mathbb{P}^2,\mathcal{O}_{\mathbb{P}^2}(n)^{r+2})$ 
 be given by the first $r$ rows of $A$. 

 Let $E'$ be the cokernel of $\mathcal{O}_{\mathbb{P}^2}
 (-1)^{2}\xrightarrow{s'}\mathcal{O}_{\mathbb{P}^2}^{r+2}$, where $s'$
is given by the $(r+2)\times 2$ matrix 
$((f_{r+j,i}))_{1\leq i\leq r+2, 1\leq j\leq 2}.$ 
As $C$ is general, we may assume that $s'$ is a general map 
$\mathcal{O}_{\mathbb{P}^2}(-1)^{2}\to\mathcal{O}_{\mathbb{P}^2}^{r+2}$.
In particular $s'$ is injective. As shown in the proof of 
{\cite[Theorem 5.2]{BSV}},
there is $\phi\in\textrm{Aut }\mathbb{P}^2$ such that 
$\phi^* E'\cong E.$

We have a short exact sequence of vector bundles:
\[ 0 \to O_{\mathbb{P}^2}(n-1)^2{\longrightarrow} O_{\mathbb{P}^2}
(n)^{r+2}{\to} E'(n) \to 0. \]
Let $s_i\in H^0(\mathbb{P}^2,E'(n))$ be the image of 
$\underline{f_i}$ under the last
map, for $1\leq i\leq r.$ So, $C$ is the degeneracy locus of 
$s_1,s_2,...,s_r$, that
is, the zero locus of $s_1\wedge s_2\wedge...\wedge s_r\in 
H^0(\mathbb{P}^2,\textrm{det }E'(n))$. Hence, $\phi^*C$ is 
the degeneracy locus of the sections 
$\phi^*s_1,\phi^*s_2,...,\phi^*s_r$ of $\phi^* E'(n)\cong E(n).$

\section{Proof of Theorem \ref{thm4}}
Let $E$ be a semi-abundant vector bundle on $X$ of rank $d\geq 2$. So,
there is an 
ample line bundle $H$ on $X$ such that $E$ is $H$-abundant. 
For $m>>0$, let 
$$U_m = \{  [V] \in Gr(d, H^0(E(m))) \,\, | \,\, V  \, \text{is a GPLI} 
\},$$
an open subvariety of $Gr(d, H^0(E(mH))).$ Also let 
$$G_m=\{\phi\in \textrm{Aut }X| \phi^*\textrm{det }(E(mH))\cong 
\textrm{det }(E(mH))\},$$
a closed subgroup of the group scheme $\textrm{Aut }X.$ 
Letting $g=\dim Aut^0(X),$
we see that every component of $G_m$ has dimension $\leq g.$ 

We have a natural map
\[
  U_m\times G_m
  \xrightarrow{T_m} \mathbb{P}(H^0(\mathbb{P}^2, \textrm{det }E(m)^*)),  
 \] 
 whose image contain an open subset of $\mathbb{P}(H^0(\mathbb{P}^2, 
 \textrm{det }E(mH)^*))$, by the definition of $H$-abundance. As 
 $\textrm{Aut }X$ has only countably many components and is locally 
 finite type over $\mathbb{C}$, $G_m$ also has only countably many 
 components. So, there is a component $N_m$ of $G_m$ such that the 
 restriction of $T_m$ to $U_m\times N_m$ is dominant. Comparing 
 dimensions of both sides, we get 
 \begin{equation}\label{ineq}
     h^0(\textrm{det }E(m))-1\leq d(h^0(E(m))-d)+g.
 \end{equation}

 By asymptotic Riemann-Roch (see for example {\cite[Appendix]
 {kollar2013rational}}), for $m>>0$, both sides of \eqref{ineq} are 
 polynomials in $m$ of degree $n$, RHS has leading term $d^2.H^n/n!$
 and LHS has leading term $d^n.H^n/n!$. Comparison of the leading terms
 shows $n\leq 2.$

 Now assume $n=2.$ We will use the following form of Riemann-Roch 
 theorem for vector bundles on a smooth projective surface $X$: For a 
 vector bundle $F$ of rank $e$ on $X$, we have for $m>>0$,
 $h^0(X,F(mH))$ is a polynomial in $m$ given by
 \begin{equation}\label{rr}
     h^0(X,F(mH))=\frac{eH^2}{2}m^2+mH.(2c_1(F)-eK)/2+const.
 \end{equation}

 Here $K$ is the canonical divisor class of $X$, and the term $const$ 
 is independent of $m.$

 Now applying \eqref{rr} to $F=E$, the RHS of \eqref{ineq} is given by 
 $$ \frac{d^2H^2}{2}m^2+\frac{dm}{2}H.(2c_1(E)-dK)+const.$$ Also,
 applying \eqref{rr} to $F=\textrm{det }E$, the LHS of \eqref{ineq} is 
 given by 

 $$\frac{d^2H^2}{2}m^2+\frac{dm}{2}H.(2c_1(E)-K)+const.$$
 Now \eqref{ineq} gives
 \begin{equation}\label{cute}
     H.K\leq 0.
 \end{equation}

 As $H$ is ample, this shows that for all $m>0$, $mK$ is not linearly
 equivalent to a nonzero effective divisor. In other words, if 
 $h^0(X, \mathcal{O}_X(mK_X))\neq 0$, then $mK_X\sim 0.$ So, either 
$\kappa(X)=-\infty$, or $\kappa(X)=0$ and $X$ is minimal.

Now suppose $E$ is abundant. Then \eqref{cute} holds for all ample $H$.
By continuity, we get $K.D\leq 0$ for all nef $\mathbb{Q}$-divisor $D$.
So, $-K_X\in N_1(X)_{\mathbb{R}}$ is in the dual of the nef cone of 
$N^1(X)_{\mathbb{R}}$, that is, the Mori cone $\overline{NE}(X)$. Now
regarding $-K_X$ as an element of $N^1(X)_{\mathbb{R}}$, we get $-K_X$ 
is pseudoeffective. Now by {\cite[Theorem 1.3]{LX}}, we are done.
\section{Proof of Theorem \ref{thm5}}

\underline{(1):} Let $S=\mathbb{C}[X_0,X_1,Y_0,Y_1]$, a bigraded ring 
with $\deg (X_i)=(1,0)$ and $\deg (Y_i)=(0,1)$. For integers $a$ and 
$b$, let $\mathcal{O}(a,b)$ denote the line bundle $\mathcal{O}
(a)\boxtimes\mathcal{O}(b)$ on $(\mathbb{P}^1)^2$. We identify 
$H^0((\mathbb{P}^1)^2, \mathcal{O}(a,b))$ with $S_{a,b}$, 
the $(a,b)$'th graded component of $S$. Note that $S_{a,b}$ is a 
finite-dimensional vector space, so we can regard it as a variety.

Let $H=\mathcal{O}(a,b)$ be ample, so $a,b>0.$ It suffices to show 
that for $m>>0$, the morphism 
$$\psi: S_{ma,mb}^4\to S_{2ma,2mb}$$ given by 
$\psi(f_1,f_2,f_3,f_4)=f_1f_4-f_2f_3$ is dominant. At $\underline{F}=
(F_1,F_2,F_3,F_4)\in S_{ma,mb}^4$, the tangent map of $\psi$ is given by
$$d\psi_{\underline{F}}(f_1,f_2,f_3,f_4)=F_1f_4+F_4f_1-F_2f_3-F_3f_2.$$
Here we are identifying the tangent space at any point of $S_{ma,mb}^4$
with $S_{ma,mb}^4$, as $S_{ma,mb}^4$ is a vector space.

By Remark \ref{rem1}, it suffices to find $\underline{F}\in S_{ma,mb}^4$
such that $d\psi_{\underline{F}}$ is surjective. The surjectivity of 
$d\psi_{\underline{F}}$ is equivalent to $S_{2ma,2mb}$ being contained 
in the ideal $(F_1,F_2,F_3,F_4)$ of $S$.

We claim that $\underline{F}=(X_0^{ma}Y_0^{mb}, X_0^{ma}Y_1^{mb}, 
X_1^{ma}Y_0^{mb},X_1^{ma}Y_1^{mb})$ works. To see this, it suffices to 
show that any monomial 
$f=X_0^{\alpha_0}X_1^{\alpha_1}Y_0^{\beta_0}Y_1^{\beta_1}\in 
S_{2ma,2mb}$ is divisible by some $F_k$. As $\alpha_0+\alpha_1=2ma$, 
there is $0\leq i\leq 1$ with $\alpha_i\geq ma$. Similarly, there is
$0\leq j\leq 1$ with $\beta_j\geq mb$. So, $X_i^{ma}Y_j^{mb}\mid f$. As
$X_i^{ma}Y_j^{mb}$ is some $F_k$, we are done.

\underline{(2) and (3):} Both follow from the following Lemma.
\begin{lem}\label{restriction}
    Let $X$ a smooth projective surface with torsion free Picard group,
    $E$ a homogeneous vector bundle on $X$ which is $H$-abundant for 
    some ample line bundle $H$ on $X$. If $C$ is a smooth projective
    curve in $X$, then $E|_C$ is $H|_C$-abundant.
\end{lem}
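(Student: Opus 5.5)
The plan is to lift a general divisor on $C$ to a general divisor on $X$, apply the $H$-abundance of $E$ there, and restrict back to $C$. Write $L_m=\det E(mH)$, a line bundle on $X$ that is ample for $m\gg 0$, and $d=\operatorname{rk}E$.

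\textbf{Step 1 (surjectivity of restriction).} The first step is to show that for $m\gg 0$ the restriction map
\[
r_m\colon H^0(X,L_m)\to H^0(C,L_m|_C)
\]
is surjective. From $0\to L_m(-C)\to L_m\to L_m|_C\to 0$ and Serre vanishing, $H^1(X,L_m(-C))=0$ for $m\gg 0$, since $L_m(-C)=\det E\otimes H^{dm}(-C)$ with $H$ ample. Note also that $\det(E|_C(mH|_C))=L_m|_C$.

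\textbf{Step 2 (general divisors on $C$ come from general divisors on $X$).} Let $D_X$ be a general member of $|L_m|$ and put $D=D_X\cap C$. Because $r_m$ is surjective, hence an open map of vector spaces, $D$ is a general member of $|L_m|_C|$. More precisely, the preimage under $r_m$ of a dense open subset of $H^0(C,L_m|_C)$ meets the dense open set of sections of $L_m$ on $X$ whose divisors are representable. Concretely, a general section $t$ of $L_m|_C$ lifts to a section $s$ of $L_m$ lying in the dense open locus where abundance applies, since the fibres $r_m^{-1}(t)$ are affine spaces and so meet any dense open set for $t$ in a dense open set.

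\textbf{Step 3 (apply abundance on $X$).} By $H$-abundance of $E$, for such $s$ there is $\phi\in\operatorname{Aut}X$ with
\[
\phi^*Z(s)=Z(\sigma_1\wedge\dots\wedge\sigma_d),\qquad \sigma_i\in H^0(X,E(mH)).
\]
Since $E$ is homogeneous and $\operatorname{Pic}X$ is torsion free, the hypotheses are meant to force $\phi^*E\cong E$ and $\phi^*H\cong H$. The point is that $\phi^*$ acts on the discrete group $\operatorname{Pic}X/\operatorname{Pic}^0X$, and torsion freeness is used so that the relevant classes, which $\phi$ preserves after replacing $\phi$ by an element of the identity component, are fixed. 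Homogeneity then lets us absorb $\phi$ altogether: $E(mH)$ is invariant under $\operatorname{Aut}^0$, and we may move $\phi$ into $\operatorname{Aut}^0$ acting on sections. The cleanest route is to argue, as in the remark after Theorem \ref{thm2}, that for homogeneous $E,H$ the automorphism is not needed. So I would show that the image of $\operatorname{Gr}(d,H^0(E(mH)))$ under $V\mapsto\wedge^dV$ is $\operatorname{Aut}^0$-stable and contains a dense open set, hence contains a dense open set with no twist required.

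\textbf{Step 4 (restrict to $C$).} Once $Z(s)$ itself is the degeneracy locus of $\sigma_1,\dots,\sigma_d$, restricting gives that $D=Z(s|_C)$ is the degeneracy locus of $\sigma_i|_C\in H^0(C,E|_C(mH|_C))$. This proves that $E|_C$ is $H|_C$-abundant, with $\phi=\mathrm{id}$ on $C$.

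\textbf{Main obstacle.} The delicate step is Step 3: eliminating the automorphism $\phi$, which need not preserve $C$. I expect this to require precisely the homogeneity of $E$ together with the torsion freeness of $\operatorname{Pic}X$, used to ensure that $\phi$ can be taken in a connected group under which $E$, $H$ and $L_m$ are invariant. The rest is routine vanishing.
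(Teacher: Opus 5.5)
\textbf{Gap in Step 3.} Your outline matches the paper's: lift a general section $t$ on $C$ to a general $s$ on $X$ via Serre vanishing, apply $H$-abundance, eliminate $\phi$, then restrict. Steps 1, 2 and 4 are fine. But Step 3, which you correctly flag as the crux, is not proved.

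\begin{itemize}
\item Saying the hypotheses ``are meant to force'' $\phi^*H\cong H$ and $\phi^*E\cong E$ is not an argument.
\item ``Replacing $\phi$ by an element of the identity component'' is not allowed. $\phi$ is whatever abundance hands you, and it can lie in any component of $\operatorname{Aut}X$.
\item Your ``cleanest route'' asserts that the untwisted image $\{Z(\wedge^d V)\}$ contains a dense open subset of $|L_m|$. That is exactly the statement that no automorphism is needed. Abundance only gives density of the union of the $\phi$-translates of this image, so the step is circular as written.
\end{itemize}

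\textbf{The missing computation.} A short argument applied to the given $\phi$ suffices.
\begin{itemize}
\item Both $Z(s)$ and $\phi^*Z(s)=Z(\sigma_1\wedge\dots\wedge\sigma_d)$ lie in $|L_m|$, so $\phi^*L_m\cong L_m$ automatically.
\item Homogeneity of $E$, in the sense the paper uses ($\phi^*E\cong E$ for every $\phi\in\operatorname{Aut}X$), gives $\phi^*\det E\cong\det E$. Hence $(\phi^*H\otimes H^{-1})^{\otimes dm}\cong\mathcal{O}_X$.
\item Torsion-freeness of $\operatorname{Pic}X$ then yields $\phi^*H\cong H$. Cancelling the factor $dm$ is the only role of torsion-freeness. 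It does not make automorphisms act trivially on $\operatorname{Pic}X$; the swap on $\mathbb{P}^1\times\mathbb{P}^1$ does not.
\item Therefore $\phi^*(E(mH))\cong E(mH)$. Composing $(\phi^{-1})^*\sigma_i$ with an isomorphism $(\phi^{-1})^*E(mH)\cong E(mH)$ gives sections $\tau_i\in H^0(X,E(mH))$ with $Z(\tau_1\wedge\dots\wedge\tau_d)=Z(s)$.
\end{itemize}
After this, your Step 4 applies verbatim.

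\textbf{If you only assume $\operatorname{Aut}^0$-invariance.} If you intended homogeneity to mean only $\operatorname{Aut}^0$-invariance of $E$, your Grassmannian route can still be completed, but it needs two things your proposal lacks. First, the same computation applied to $g\in G_m^0$, to get stability of the image under the identity component of $G_m=\{\phi:\phi^*L_m\cong L_m\}$. Second, the countable-components argument from the proof of Theorem \ref{thm4}, to pass from the dense union of translates to a single dense translate $\phi_i^*(\text{image})$.
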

\begin{proof}
    Let $d$ be the rank of $X$. For $m>>0$, we have a surjection 
    $$H^0(X,\textrm{det }E(mH))\to H^0(C,\textrm{det }E(mH)|_C).$$ Let
    $t\in H^0(C,\textrm{det }E(mH)|_C)$ be general. Choose $\tilde{t}\in 
    H^0(X,\textrm{det }E(mH)) $ mapping to $t$ under this surjection. As
    $t$ is general, we can assume $\tilde{t}$ is also general. As $E$ is
    $H$-abundant, there is $\phi\in \textrm{Aut }X$, an isomorphism
    \begin{equation}\label{isom}
        \phi^*\textrm{det }E(mH)\cong \textrm{det }E(mH),
    \end{equation}
and sections $\tilde{s_1},\tilde{s_2},...,\tilde{s_d}$ of $E(mH)$ 
    such that $\tilde{t}=\phi^*\tilde{s_1}\wedge 
    \phi^*\tilde{s_2}\wedge...\wedge \phi^*\tilde{s_d}$. As $E$ is
    homogeneous, \eqref{isom} shows $\phi^*(dmH)\cong dmH$, hence
    $dm(\phi^*H-H)=0$ in $\textrm{Pic }X$. As $\textrm{Pic }X$ is
    torison free, we get $\phi^*H\cong H$. Hence $\phi^*E(mH)\cong 
    E(mH)$. So we can regard $\phi^*\tilde{s_i}$ as a section of $
    E(mH)$, let $s_i$ be its restriction to $C$, hence a section of
    $E(mH)|_C$. Now $t$ is a nonzero scalar multiple of $s_1\wedge 
    s_2\wedge ...\wedge s_d$, completing the proof.
\end{proof}
\begin{rem}
    By a similar proof as in $(1)$ of Theorem \ref{thm5} one can show 
    several other rank $2$ split bundles on $(\mathbb{P}^1)^2$ is
    abundant.
\end{rem}
\begin{rem}
    Any curve of genus $\leq 3$ and any hyperelliptic curve of any genus 
    satisfies the hypothesis in $(2)$ of Theorem \ref{thm5}.
\end{rem}
\printbibliography
\end{document}